\documentclass{amsart}%
\usepackage[utf8]{inputenc}
\usepackage{amsmath, amssymb, amsthm, amsfonts}
\usepackage{tikz-cd}
\usepackage{amsmath}
\usepackage{amsfonts}
\usepackage{amssymb}
\usepackage{graphicx}%
\providecommand{\U}[1]{\protect\rule{.1in}{.1in}}
\newtheorem{theorem}{Theorem}[section]
\newtheorem{lemma}[theorem]{Lemma}
\newtheorem{proposition}[theorem]{Proposition}
\newtheorem{corollary}[theorem]{Corollary}
\theoremstyle{definition}
\newtheorem{definition}[theorem]{Definition}
\newtheorem{example}[theorem]{Example}
\newtheorem{remark}[theorem]{Remark}
\begin{document}
\title[SDFA-primary Submodules]{Generalized Square-Difference Factor Absorbing Submodules of Modules over
Commutative Rings}
\author{Violeta Leoreanu-Fotea$^* $}
\address{Faculty of Mathematics, Al.I. Cuza University, Bd. Carol I, No. 11, 700506
Ia\c{s}i, Romania.}
\email{foteavioleta@gmail.com}
\thanks{*Corresponding author: foteavioleta@gmail.com} 
\author{Ece Yetkin Celikel}
\address{Department of Software Engineering, Hasan Kalyoncu University, Gaziantep, Turkiye}
\email{ece.celikel@hku.edu.tr, yetkinece@gmail.com}
\author{Tarik Arabaci}
\address{Department of Basic Science, Faculty of Engineering and Architecture, İstanbul Geli\c{s}im University, Istanbul, T\"urkiye}
\email{tarabaci@gelisim.edu.tr}
\author{Unsal Tekir}
\address{Department of Mathematics, Marmara University, Istanbul, T\"urkiye}
\email{utekir@marmara.edu.tr}
\keywords{Square-difference factor absorbing ideal, sdf-absorbing primary ideal,
sdf-absorbing submodule, primary submodule.}
\subjclass{\noindent13A15, 13C05, 13C13}
\maketitle

\begin{abstract}
In this paper, we introduce and study the class of generalized
square-difference factor absorbing (gsdf-absorbing) submodules of modules over
commutative rings. We provide various characterizations and properties of
gsdf-absorbing submodules and examine the behavior of this class of submodules
in some module extensions, including localization, homomorphic images, direct
products, idealization, and amalgamation. We also characterize all
gsdf-absorbing submodules of the $%
\mathbb{Z}
$-module $%
\mathbb{Z}
$. Several examples are provided to illustrate the results and to distinguish
this class from related notions.

\end{abstract}

\section{Introduction}

All rings considered in this paper are commutative with identity, and all
modules are assumed to be unital. Let $R$ be such a ring with $1\neq0$, and
let $M$ be an $R$-module. The notions of prime and primary ideals are
fundamental tools in commutative algebra, especially in connection with
primary decomposition \cite{AM1969}. Their extensions to submodules have been
extensively studied, leading to the well-developed theory of prime and primary
submodules \cite{M1992,SYZ1994}.

A particularly useful setting is provided by multiplication modules,
introduced in \cite{ES1988}, where each submodule is of the form $IM$ for some
ideal $I$ of $R$. This framework creates a strong bridge between
ideal-theoretic and module-theoretic properties, allowing one to transfer
results between the two contexts.

Several generalizations of prime and primary ideals have been proposed, many
of them relying on absorbing-type conditions. Among these, the concepts of
$2$-absorbing and $2$-absorbing primary ideals, introduced by Badawi and
collaborators \cite{B2007,BTY}, have received considerable attention. Recall
that for an ideal $I$ of $R$, its radical $\sqrt{I}$ consists of all elements
$u\in R$ such that $u^{n}\in I$ for some positive integer $n$. A proper ideal
$I$ of $R$ is called $2$-absorbing if whenever $u,v,w\in R$ with $uvw\in I$
implies that one of $uv$, $uw$, or $vw$ lies in $I$. A $2$-absorbing primary
ideal satisfies the condition: $uvw\in I$ implies that $uv\in I$ or
$uw\in\sqrt{I}$ or $vw\in\sqrt{I}$. A different direction was recently
initiated through the notion of square-difference factor absorbing ideals
\cite{AB2024}. A proper ideal $I$ of $R$ is said to be sdf-absorbing if
$u^{2}-v^{2}\in I$ for some $0\neq u,v\in$ $R$ forces $u+v\in I$ or $u-v\in I$.
Square-difference
factor absorbing primary ideals are studied in \cite {KhashanCelikel}.

It is known that a proper submodule $N$ of an $R$-module $M$ is primary if
whenever $u\in R$, $m\in M$ with $ux\in N$ implies $x\in N$ or $u\in
\sqrt{(N:_{R}M)}$ \cite{SYZ1994}. Other variants have also been studied; for
instance, classical primary submodules, introduced in \cite{Baz}, require that
for $u,v\in R$, $x\in M$ with $uvx\in N$ implies $ux\in N$ or $v^{k}x\in N$
for some $k\geq1$. Given a submodule $N$ of an $R$-module $M$, its
$M$-radical, denoted by $M$-$rad(N)$ is defined by the intersection of all
prime submodules of $M$, containing $N.$ Further extensions include
$2$-absorbing and $2$-absorbing primary submodules (see \cite{Dar,Pay,M}),
which adapt the corresponding ideal-theoretic conditions to the module setting
via $(N:_{R}M)$ and $M$-$rad(N)$. More recently, sdf-absorbing submodules were
introduced in \cite{Khashan2025}. By $Ann(x)$, we denote the set of $r\in R$
satisfying $rx=0.$ A proper submodule $N$ of $M$ is sdf-absorbing if whenever
$x\in M$ and $u,v\in R\setminus\mathrm{Ann}(x)$ such that $(u^{2}-v^{2})x\in
N$ implies $(u-v)x\in N$ or $(u+v)x\in N$.

Motivated by these developments, we propose a new class of submodules, called
gsdf-absorbing submodules which is a generalization of both classical primary
and sdf-absorbing submodules. A proper submodule $N$ of $M$ is said to be
\emph{generalized square-difference factor absorbing submodules }(briefly,
gsdf-absorbing submodules) if whenever $u,v\in R$, $x\in M$ with $(u^{2}%
-v^{2})x\in N$ implies $(u-v)x\in N$ or $(u+v)^{k}x\in N$ for some $k\geq1$.
We investigate their properties, provide various characterizations and study
their behavior with respect to several constructions, including localization,
factor modules, intersections, and direct products, as well as idealization
and amalgamation.

As an application, we characterize all gsdf-absorbing submodules of $%
\mathbb{Z}
$-module $%
\mathbb{Z}
$ (see Theorem \ref{Zn}). We prove that this occurs for $N=n%
\mathbb{Z}
$ precisely when $n=p^{k}$ or $n=2p^{k}$, where $p$ is prime and $k\geq1$.
This illustrates the influence of the arithmetic structure of $n$ on the
gsdf-absorbing condition. These results place gsdf-absorbing submodules within
the broader family of absorbing-type generalizations and clarify their
relationship with previously studied notions.

\section{Generalized square-difference Factor Absorbing Submodules}

We now introduce the main concept of this paper, namely \emph{generalized
square-difference factor absorbing submodules}, which constitute the central
focus of our study.

\begin{definition}
Let $R$ be a ring and $M$ an $R$-module. A proper submodule $N$ of $M$ is
called a \emph{generalized square-difference factor absorbing submodule}
(abbreviated as \emph{gsdf-absorbing submodule}) if, for all $u ,v \in R$ and
$x \in M$, the condition $(u ^{2} - v ^{2})x \in N$ implies that either $(u -v
)x \in N$ or $(u +v )^{k} x \in N$ for some positive integer $k$.
\end{definition}

The following examples serve to illustrate the concept of gsdf-absorbing submodules.

\begin{example}
\ 
\end{example}

\begin{enumerate}
\item Let $R$ be a ring of characteristic $2$ (i.e., $1+1=0$ in $R$). Then
every proper submodule of an $R$-module is gsdf-absorbing. Indeed, if $N$ is a
proper submodule of $M$ and $u,v\in R$, $x\in M$ satisfy $(u^{2}-v^{2})x\in
N$, then using $char(R)=2$, we have $(u+v)^{2}x=(u^{2}-v^{2})x\in N$.

\item In a reduced $R$-module $M$ (i.e., for $x\in M$ and $u\in R$, $u^{2}x=0$
implies $ux=0$), the zero submodule is gsdf-absorbing if and only if it is sdf-absorbing.

\item Let $R$ be a von Neumann regular ring. In this case, a proper submodule
$N$ of an $R$-module $M$ is gsdf-absorbing if and only if it is sdf-absorbing.
Indeed, in a von Neumann regular ring every ideal is radical (i.e., $\sqrt
{I}=I$), which implies that $\sqrt{(N:_{R}x)}=(N:_{R}x)$, making the
definitions of gsdf-absorbing and sdf-absorbing submodules coincide.
\end{enumerate}

The diagram below situates gsdf-absorbing submodules within the existing
hierarchy of submodule classes, highlighting their close connections to
related notions.

\begin{center}
$%
\begin{array}
[c]{ccccc}%
\text{primary submodule} & \rightarrow & \text{classical primary submodule} &
& \\
&  &  & \searrow & \\
&  &  &  & \text{gsdf-absorbing submodule}\\
&  &  & \nearrow & \\
&  & \text{sdf-absorbing submodule} &  &
\end{array}
\bigskip$
\end{center}

The following examples illustrate that the arrows in the previous diagram are
not reversible.

\begin{example}
\ 
\end{example}

\begin{enumerate}
\item Consider the $\mathbb{Z}$-module $\mathbb{Z}$ and its submodule
$8\mathbb{Z}$. This submodule is gsdf-absorbing. Indeed, let $u,v,x\in
\mathbb{Z}$ satisfy $(u^{2}-v^{2})x\in8\mathbb{Z}$. If $(u-v)x\notin
8\mathbb{Z}$, then $u+v\in2\mathbb{Z}$, so that $(u+v)^{k}x\in8\mathbb{Z}$ for
some $k\geq1$. However, $8\mathbb{Z}$ is \emph{not} sdf-absorbing: for
instance, $(3^{2}-1^{2})\cdot1=8\in8\mathbb{Z}$, but neither $(3-1)\cdot1=2$
nor $(3+1)\cdot1=4$ belong to $8\mathbb{Z}$.

\item Consider $\mathbb{Z}$-module $\mathbb{Z}_{12}.$ Then, the submodule
$(6)$ is gsdf-absorbing in $\mathbb{Z}$-module $\mathbb{Z}_{12}$. Indeed, let
$x\in\mathbb{Z}_{12}$ and $u,v\in\mathbb{Z}$ such that $(u^{2}-v^{2})x\in(6)$
and $(u-v)x\notin(6)$. Then $(u+v)-(u-v)=2v\equiv0\pmod{2}$, so $(u+v)\equiv
(u-v)\pmod{2}$. Hence, we have the following two cases:

\text{Case I.} Suppose both $u+v$ and $u-v$ are even. Since $(u-v)x\notin(6)$,
we have $3\nmid(u-v)$ and $3\nmid x$. But $3\mid(u+v)(u-v)x$, which forces
$3\mid(u+v)$, and therefore $(u+v)x\in(6)$.

\text{Case II.} Suppose both $u+v$ and $u-v$ are odd. Then $(u+v)(u-v)x\in
(6)\subset(2)$ implies $2\mid x$. Since $(u-v)x\notin(6)$, we must have
$3\nmid(u-v)$. But $3\mid(u+v)(u-v)x$ again forces $3\mid(u+v)x$, so
$(u+v)x\in(6)$.

Thus, $(6)$ is gsdf-absorbing in $\mathbb{Z}_{12}$. However, $(6)$ is
\emph{not} classical primary as $2\cdot3\cdot1\in(6)$, but neither $2\cdot
1\in(6)$ nor $3^{n}\cdot1\in(6)$ for any $n\geq1$.
\end{enumerate}

The following result establishes a condition under which a classical primary
submodule becomes a gsdf-absorbing submodule.

\begin{proposition}
Let $R$ be a ring, $2\in U(R)$ and let $M$ be an $R$-module. A proper
submodule $N$ of $M$ is gsdf-absorbing if and only if $N$ is a classical
primary submodule of $M.$
\end{proposition}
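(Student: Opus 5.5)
We prove the two implications separately. The bridge is the substitution of $u=(a+b)/2$ and $v=(b-a)/2$ (or the reverse roles), which is available because $2\in U(R)$. Under it $u+v=b$, $u-v=a$ and $u^{2}-v^{2}=ab$. So every product $ab$ of two ring elements is a square difference whose two factors are $a$ and $b$, and this lets us pass between the two definitions.

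\emph{Classical primary $\Rightarrow$ gsdf-absorbing.} Let $u,v\in R$ and $x\in M$ with $(u^{2}-v^{2})x\in N$. Write this as $(u-v)(u+v)x\in N$. The classical primary condition with the first factor $u-v$ and the second factor $u+v$ gives $(u-v)x\in N$ or $(u+v)^{k}x\in N$ for some $k\geq1$. This is exactly the gsdf-absorbing condition. This direction does not use $2\in U(R)$.

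\emph{Gsdf-absorbing $\Rightarrow$ classical primary.} Let $a,b\in R$ and $x\in M$ with $abx\in N$. Put $u=2^{-1}(a+b)$ and $v=2^{-1}(b-a)$. Then $u-v=a$, $u+v=b$ and $(u^{2}-v^{2})x=abx\in N$. The gsdf-absorbing property gives $ax\in N$ or $b^{k}x\in N$ for some $k\geq1$, which is the classical primary condition for the triple $(a,b,x)$. The definition from \cite{Baz} asks for exactly this, with $ax\in N$ or $b^{k}x\in N$ for the ordered pair $(a,b)$. Since $a$ and $b$ were arbitrary, nothing more is needed, and the swapped conclusion $bx\in N$ or $a^{k}x\in N$ follows by applying the same argument to $(b,a,x)$.

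The only real point is checking the algebra of the substitution. We have $u-v=2^{-1}\bigl((a+b)-(b-a)\bigr)=a$ and $u+v=2^{-1}\bigl((a+b)+(b-a)\bigr)=b$. Hence $u^{2}-v^{2}=(u-v)(u+v)=ab$, and the argument goes through. Properness of $N$ is common to both definitions, so it needs no separate treatment.
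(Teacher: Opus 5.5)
Your proof is correct and follows the paper's argument: one direction uses the same substitution $u=2^{-1}(a+b)$, $v=2^{-1}(b-a)$, giving $u-v=a$, $u+v=b$ and $u^{2}-v^{2}=ab$. The other direction factors $u^{2}-v^{2}=(u-v)(u+v)$ and applies the classical primary condition, and you are right that this direction does not need $2\in U(R)$.
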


\begin{proof}
Suppose that $N$ is a gsdf-absorbing submodule of $M$. Given $u,v\in R$ and
$x\in M$ with $uvx\in N$, set $a=\frac{u+v}{2},$ $b=\frac{v-u}{2}.$ Then
$a^{2}-b^{2}=uv$, so $(a^{2}-b^{2})x=uvx\in N$. By the gsdf-absorbing
property, either $(a-b)x=ux\in N$ or $(a+b)^{k}x=v^{k}x\in N$ for some
$k\geq1$, as needed. Conversely, suppose that $N$ is a classical primary
submodule of $M$. For any $a,b\in R$ and $x\in M$ with $(a^{2}-b^{2})x\in N$,
take $u=a-b$ and $v=a+b$. Then $uvx=(a^{2}-b^{2})x$, and the hypothesis gives
$ux=(a-b)x\in N$ or $v^{k}x=(a+b)^{k}x\in N$, verifying that $N$ is gsdf-absorbing.
\end{proof}

Let $R$ be a ring and $M$ an $R$-module. For a submodule $N\subseteq M$, an
element $r\in R$, and $x\in M$, by $(N:_{R}x)$ and $(N:_{M}r)$, we denote the
ideal $\{r\in R\mid rx\in N\}$ of $R$ and the submodule $\{x\in M\mid rx\in
N\}$ of $M,$ respectively.

The following theorem presents several equivalent characterizations of
gsdf-absorbing submodules in arbitrary modules.

\begin{theorem}
\label{eq} Let $M$ be an $R$-module and $N$ a proper submodule of $M$. Then
the following statements are equivalent:

\begin{enumerate}
\item $N$ is a gsdf-absorbing submodule of $M$.

\item For every $r \in R$ with $rM \nsubseteq N$, the submodule $(N :_{M} r)$
is gsdf-absorbing.

\item For every $x \in M \setminus N$, the ideal $(N :_{R} x)$ is
sdf-absorbing primary.

\item For every finitely generated submodule $K$ of $M$ with $K \nsubseteq N$,
the ideal $(N:_{R} K)$ is sdf-absorbing primary.
\end{enumerate}
\end{theorem}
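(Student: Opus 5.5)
The plan is to prove the cycle $(1)\Leftrightarrow(2)$, $(1)\Leftrightarrow(3)$ and $(3)\Leftrightarrow(4)$ by direct element-chasing. Here I use the definition from \cite{KhashanCelikel}: a proper ideal $I$ is sdf-absorbing primary if, for $0\neq a,b\in R$, $a^{2}-b^{2}\in I$ implies $a+b\in I$ or $a-b\in\sqrt{I}$. Replacing $b$ by $-b$ shows this is equivalent to requiring $a-b\in I$ or $a+b\in\sqrt{I}$, and I will use the latter form so that it matches the definition of gsdf-absorbing submodules.

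\emph{$(1)\Leftrightarrow(2)$.} For $(1)\Rightarrow(2)$, note first that $(N:_{M}r)$ is proper exactly because $rM\nsubseteq N$. Now suppose $(u^{2}-v^{2})x\in(N:_{M}r)$. Then $(u^{2}-v^{2})(rx)\in N$, and applying (1) to the element $rx$ gives either $(u-v)rx\in N$ or $(u+v)^{k}rx\in N$. These say precisely that $(u-v)x\in(N:_{M}r)$ or $(u+v)^{k}x\in(N:_{M}r)$. For $(2)\Rightarrow(1)$, take $r=1$, so that $(N:_{M}1)=N$.

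\emph{$(1)\Leftrightarrow(3)$.} Fix $x\in M\setminus N$, so that $(N:_{R}x)$ is proper. The condition $(a^{2}-b^{2})x\in N$ is the same as $a^{2}-b^{2}\in(N:_{R}x)$. Moreover, the conclusion of (1) is $(a-b)x\in N$ or $(a+b)^{k}x\in N$, which translates literally into $a-b\in(N:_{R}x)$ or $a+b\in\sqrt{(N:_{R}x)}$. This gives $(1)\Rightarrow(3)$.

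For the converse, two cases are not covered by (3) and must be handled directly.
\begin{itemize}
\item If $x\in N$, the conclusion of (1) is automatic.
\item If $a=0$ or $b=0$, say $b=0$, then $a^{2}x\in N$, and we may take $k=2$. If $a=0$, then $b^{2}x\in N$ gives $(a+b)^{2}x=b^{2}x\in N$.
\end{itemize}
These trivial cases are exactly what is needed to bridge the nonzero hypothesis in the definition of sdf-absorbing primary ideals.

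\emph{$(3)\Leftrightarrow(4)$.} For $(4)\Rightarrow(3)$, take $K=Rx$ and use $(N:_{R}Rx)=(N:_{R}x)$. For $(3)\Rightarrow(4)$, which is the only nontrivial step, write $K=Rx_{1}+\cdots+Rx_{n}$. Then $(N:_{R}K)=\bigcap_{i}(N:_{R}x_{i})$, and this ideal is proper because $K\nsubseteq N$. An intersection of sdf-absorbing primary ideals need not be sdf-absorbing primary, so I will argue directly. Let $a^{2}-b^{2}\in(N:_{R}K)$ and $a-b\notin(N:_{R}K)$, and pick $j$ with $(a-b)x_{j}\notin N$. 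By (1), which we now have, $(a+b)^{k_{j}}x_{j}\in N$ for some $k_{j}$. Now take an index $i$ with $(a-b)x_{i}\in N$. Then $(a-b)(x_{i}+x_{j})\notin N$, while $(a^{2}-b^{2})(x_{i}+x_{j})\in N$. Hence $(a+b)^{m}(x_{i}+x_{j})\in N$ for some $m$, and multiplying to a common exponent gives $(a+b)^{\max(m,k_{j})}x_{i}\in N$. Indices with $(a-b)x_{i}\notin N$ are handled directly by (1). Taking the maximum exponent over all $i$ yields $(a+b)^{k}K\subseteq N$, that is, $a+b\in\sqrt{(N:_{R}K)}$.

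The main obstacle is this last step, which requires passing from the individual colon ideals to their intersection. The sum trick $x_{i}+x_{j}$ is what resolves it.
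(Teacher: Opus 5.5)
Your proposal is correct and follows essentially the paper's argument: the same colon-ideal translation for $(1)$–$(3)$, $K=Rx$ for the return direction, and the same $x_i+x_j$ trick to get a common exponent in the finitely generated case. The only differences are organizational: you prove equivalences through $(1)$ rather than the cycle $(1)\Rightarrow(2)\Rightarrow(3)\Rightarrow(4)\Rightarrow(1)$, and you explicitly handle the $x\in N$ and zero-element cases that the paper passes over.
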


\begin{proof}
$(1)\Rightarrow(2)$: Let $r\in R$ with $rM\nsubseteq N$. Take $u,v\in R$ and
$x\in M$ such that $(u^{2}-v^{2})x\in(N:_{M}r)\ \text{meaning}\ (u^{2}%
-v^{2})rx\in N.$ Since $N$ is gsdf-absorbing, either $(u-v)rx\in
N\ \text{or}\ (u+v)^{k}rx\in N$ for some $k\geq1$. Hence, $(u-v)x\in(N:_{M}r)$
or $(u+v)^{k}x\in(N:_{M}r),$ showing that $(N:_{M}r)$ is gsdf-absorbing.

$(2)\Rightarrow(3)$: First, note that since $1_{R}M=M\nsubseteq N$,
$(N:_{M}1_{R})=N$ is gsdf-absorbing. Let $x\in M\setminus N$. If $u^{2}%
-v^{2}\in(N:_{R}x)$, the gsdf-absorbing property gives $(u-v)x\in
N\ \text{or}\ (u+v)^{k}x\in N$ for some $k\geq1$, hence $u-v\in(N:_{R}%
x)\ \text{or}\ (u+v)^{k}\in(N:_{R}x),$ so $(N:_{R}x)$ is sdf-absorbing primary.

$(3)\Rightarrow(4)$: Let $K=\langle x_{1},x_{2},\dots,x_{n}\rangle$, where
$x_{1},x_{2},\dots,x_{n}\in M$. Assume that $(u^{2}-v^{2})K\subseteq N$, but
$(u-v)K\nsubseteq N$. Then, there exists at least one generator $x_{i}$ such
that $(u-v)x_{i}\notin N$. Without loss of generality, suppose that
$(u-v)x_{i}\notin N\text{ for }i=1,\dots,s,$ and $(u-v)x_{j}\in N\text{ for
}j=s+1,\dots,n.$ Since $(u^{2}-v^{2})x_{i}\in N$ and $(u-v)x_{i}\notin N$, for
each $i=1,\dots,s$, there exists $t_{i}\geq1$ such that $(u+v)^{t_{i}}x_{i}\in
N.$ Set $t=\max\{t_{1},t_{2},\dots,t_{s}\}.$ If $s=n$, then we obtain
$(u+v)^{t}K\subseteq N,$ and the conclusion follows.

Now, assume that $s<n$. For each element $x_{j}$ in $\{x_{s+1},...,x_{n}\}.$
Then, since $(u-v)x_{1}\notin N$ and $(u-v)x_{j}\in N,$ we have $(u-v)(x_{1}%
+x_{j})\notin N,$ and clearly $(u^{2}-v^{2})(x_{1}+x_{j})\in N.$ Therefore,
there exists $t_{j}\geq1$ for each $j=s+1,\dots,n$ such that $(u+v)^{t_{j}%
}(x_{1}+x_{j})\in N.$ Put $t^{\prime}=\max\{t_{1},t_{2},...,t_{s}%
,t_{s+1},...,t_{n}\}=\max\{t,t_{s+1},...,t_{n}\}.$ Hence, $(u+v)^{t^{\prime}%
}x_{j}\in N$ for all $j=s+1,\dots,n.$ Consequently, we have $(u+v)^{t^{\prime
}}x_{i}\in N$ for all $i=1,\dots,n,$ and thus, $(u+v)^{t}K\subseteq N,$ as required.

$(4)\Rightarrow(1)$: Let $x\in M$ and $(u^{2}-v^{2})x\in N$. If $x\notin N$,
set $K=Rx$; then $K\nsubseteq N$, so $(N:_{R}K)$ is sdf-absorbing primary.
Since $u^{2}-v^{2}\in(N:_{R}K)=(N:_{R}x)$, it follows that $u-v\in
(N:_{R}x)\text{ or }(u+v)^{k}\in(N:_{R}x),$ hence $(u-v)x\in N$ or
$(u+v)^{k}x\in N.$ Therefore $N$ is gsdf-absorbing.
\end{proof}

Next, we present a condition for a gsdf-absorbing submodule to be prime, see \cite{M1992}. We
say that a gsdf-absorbing submodule $N$ of $M$ is maximal if it is not
properly contained in any gsdf-absorbing submodule of $M$.

\begin{proposition}
Any maximal gsdf-absorbing submodule of an $R$-module $M$ is prime.
\end{proposition}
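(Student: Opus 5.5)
The plan is to use the standard definition of a prime submodule: $N$ is prime if whenever $r\in R$ and $x\in M$ satisfy $rx\in N$, then $x\in N$ or $rM\subseteq N$. The engine is the implication $(1)\Rightarrow(2)$ of Theorem \ref{eq}, combined with the maximality of $N$ among gsdf-absorbing submodules. Let $N$ be a maximal gsdf-absorbing submodule of $M$. Take $r\in R$ and $x\in M$ with $rx\in N$. I will show that $rM\nsubseteq N$ forces $x\in N$.

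So assume $rM\nsubseteq N$. First, $(N:_{M}r)$ is a proper submodule of $M$: if it equaled $M$, we would have $rM\subseteq N$. Second, $N\subseteq(N:_{M}r)$, since $rN\subseteq N$. Third, by Theorem \ref{eq} $(1)\Rightarrow(2)$, the hypothesis $rM\nsubseteq N$ makes $(N:_{M}r)$ a gsdf-absorbing submodule of $M$. Maximality of $N$ then gives $(N:_{M}r)=N$. Since $rx\in N$ means exactly $x\in(N:_{M}r)$, we conclude $x\in N$. This proves that $N$ is prime.

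No step here is really an obstacle. The one point needing care is properness of $(N:_{M}r)$, which is needed to invoke maximality (gsdf-absorbing submodules are required to be proper). This follows at once from the assumption $rM\nsubseteq N$, which is also exactly the hypothesis needed to apply part (2) of Theorem \ref{eq}.
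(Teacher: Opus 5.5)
Your proof is correct and follows the paper's argument. Both apply Theorem \ref{eq} $(1)\Rightarrow(2)$ to see that $(N:_{M}r)$ is gsdf-absorbing whenever $rM\nsubseteq N$, and then use maximality to force $(N:_{M}r)=N$. You also state explicitly that $N\subseteq(N:_{M}r)$ and that $(N:_{M}r)$ is proper, both of which the paper leaves implicit.
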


\begin{proof}
Let $u\in R$ and $x\in M$ such that $ux\in N$ and $u\not \in (N:_{R}M).$ Then,
$uM\not \subseteq N$ and from Theorem \ref{eq}, $(N:_{M}u)$ is a
gsdf-absorbing submodule of $M.$ By the maximality of $N$, we conclude that
$x\in(N:_{M}u)=N.$ Thus, $N$ is a prime submodule of $M.$
\end{proof}

Motivated by the primary decomposition, we say that a submodule $N$ of $M$
\emph{admits a gsdf-absorbing decomposition} if $N=\bigcap_{i=1}^{n}Q_{i},$
where each $Q_{i}$ is a gsdf-absorbing submodule of $M$.

\begin{remark}
If $M$ is a Noetherian $R$-module, then every proper submodule $N\subseteq M$
admits a gsdf-absorbing decomposition.
\end{remark}

\begin{proof}
Since $M$ is Noetherian, every submodule $N$ has a primary decomposition
$N=\bigcap_{i=1}^{n}Q_{i},$ where each $Q_{i}$ is a primary submodule of $M$
(see \cite[page 423]{Lang}). As every primary submodule is also
gsdf-absorbing, this decomposition provides a gsdf-absorbing decomposition for
$N$.
\end{proof}

However, gsdf-absorbing decompositions are not necessarily unique. For
example, in the $\mathbb{Z}$-module $\mathbb{Z}_{24}$, the submodule
$12\mathbb{Z}$ admits two distinct gsdf-absorbing decompositions (see
\cite[Example 3]{Khashan2025}): $12\mathbb{Z}=3\mathbb{Z}\cap4\mathbb{Z}\text{
and }12\mathbb{Z}=6\mathbb{Z}\cap4\mathbb{Z}.$

\begin{proposition}
Let $I$ be a principal ideal of a ring $R$, $M$ a $R$-module, and $N$ a proper
submodule of $IM$. Then $N$ is gsdf-absorbing in $IM$ if and only if
$(N:_{M}I)$ is gsdf-absorbing in $M$.
\end{proposition}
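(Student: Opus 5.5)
The plan is to reduce everything to a single generator. Write $I=Ra$ for some $a\in R$. Then $IM=aM=\{ax\mid x\in M\}$, and $(N:_{M}I)=(N:_{M}a)=\{x\in M\mid ax\in N\}$, because $rax\in N$ for all $r\in R$ exactly when $ax\in N$. With this description, both directions become direct translations of the defining condition. The argument has the same flavour as the step $(1)\Rightarrow(2)$ of Theorem \ref{eq}, carried out inside the module $IM$.

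First I check properness on both sides. Since $N$ is a proper submodule of $IM=aM$, we have $aM\nsubseteq N$, so some $x\in M$ has $ax\notin N$. Hence $(N:_{M}a)\neq M$. Conversely, properness of $N$ in $IM$ is part of the hypothesis, so the two properness conditions need no further work.

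($\Rightarrow$) Assume $N$ is gsdf-absorbing in $IM$. Take $u,v\in R$ and $x\in M$ with $(u^{2}-v^{2})x\in(N:_{M}a)$, that is, $(u^{2}-v^{2})(ax)\in N$. Since $ax\in IM$, the gsdf-absorbing property of $N$ in $IM$ gives either $(u-v)ax\in N$ or $(u+v)^{k}ax\in N$ for some $k\geq1$. These say exactly that $(u-v)x\in(N:_{M}a)$ or $(u+v)^{k}x\in(N:_{M}a)$, so $(N:_{M}I)$ is gsdf-absorbing in $M$.

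($\Leftarrow$) Assume $(N:_{M}a)$ is gsdf-absorbing in $M$. Let $y\in IM$ and $u,v\in R$ with $(u^{2}-v^{2})y\in N$. By principality, $y=ax$ for some $x\in M$, so $(u^{2}-v^{2})x\in(N:_{M}a)$. The hypothesis then yields $(u-v)x\in(N:_{M}a)$ or $(u+v)^{k}x\in(N:_{M}a)$. Multiplying back by $a$, this means $(u-v)y\in N$ or $(u+v)^{k}y\in N$, as required. The choice of preimage $x$ does not matter, since both conclusions are statements about $ax=y$.

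The only genuinely delicate point is this use of principality: every element of $IM$ must have the form $ax$, and $(N:_{M}I)$ must equal $(N:_{M}a)$. For a non-principal $I$, elements of $IM$ are sums $\sum a_{i}x_{i}$, and the correspondence between $IM$ and $M$ would break down. This is why the hypothesis that $I$ is principal is essential.
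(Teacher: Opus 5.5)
Your proof is correct and is essentially the paper's argument: write $I=(a)$, identify $(N:_{M}I)$ with $(N:_{M}a)$, and translate the condition through $y=ax$ in both directions. Your explicit check that $(N:_{M}a)\neq M$ fills a small gap, since the paper never verifies that $(N:_{M}I)$ is proper. Your closing remark shows only that principality is essential to this argument, not that the statement itself fails for non-principal $I$.
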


\begin{proof}
Assume $N$ is gsdf-absorbing in $IM$, where $I=(i_{0}).$ Take any $x\in M$ and
$u,v\in R$ such that $(u^{2}-v^{2})x\in(N:_{M}I),\ \text{i.e., }(u^{2}%
-v^{2})i_{0}x\in N.$ The gsdf-absorbing property of $N$ implies $(u-v)i_{0}%
x\in N\ \text{or}\ (u+v)^{k}i_{0}x\in N\text{ for some }k\geq1.$ Hence,
$(u-v)x\in(N:i_{0})\ \text{or}\ (u+v)^{k}x\in(N:i_{0}),$ showing that $(N:I)$
is sdf-absorbing primary in $M$.

Conversely, assume $(N:_{M}I)$ is gsdf-absorbing in $M$. Let $x_{0}\in IM$ and
$u,v\in R$ with $(u^{2}-v^{2})x_{0}\in N.$ Since $x_{0}\in IM$ and $I=(i_{0}%
)$, it follows that $x_{0}=i_{0}x^{\prime}$ for some $x^{\prime}\in M.$ Then
$(u^{2}-v^{2})i_{0}x^{\prime}\in N$, whence $(u^{2}-v^{2})x^{\prime}\in
(N:_{M}I).$ By the gsdf-absorbing property of $(N:_{M}I)$, either
$(u-v)x^{\prime}\in(N:_{M}I)\ \text{or}\ (u+v)^{k}x^{\prime}\in(N:_{M}I),$ for
some $k\geq1$. Hence $(u-v)Ix^{\prime}\subseteq N$ or $(u+v)^{k}Ix^{\prime
}\subseteq N,$ for some $k\geq1$ and thus, $N$ is gsdf-absorbing in $IM$ since
$x_{0}\in Ix^{\prime}$.
\end{proof}

We now investigate when the zero submodule $\{0\}$ is a gsdf-absorbing
submodule of the $\mathbb{Z}$-module $\mathbb{Z}_{n}$. To this end, we first
prove the following lemma.

\begin{lemma}
\label{l1} Let $n \in\mathbb{N}$. If $\{0\}$ is a gsdf-absorbing submodule of
$\mathbb{Z}_{n}$, then $n$ has at most one odd prime divisor.
\end{lemma}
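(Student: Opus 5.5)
We argue by contraposition. Suppose $n$ has two distinct odd prime divisors $p$ and $q$. We will exhibit $u,v\in\mathbb{Z}$ and $x\in\mathbb{Z}_n$ with $(u^2-v^2)x=0$ in $\mathbb{Z}_n$, while $(u-v)x\neq 0$ and $(u+v)^k x\neq 0$ for every $k\geq 1$. By Definition, this shows that $\{0\}$ is not gsdf-absorbing.

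\textbf{Reduction to $x=n/(pq)$.} Write $n=pq\,m$ and take $x=\overline{m}$. Then for an integer $r$ we have $rx=0$ in $\mathbb{Z}_n$ if and only if $pq\mid r$. So it suffices to find integers $u,v$ satisfying three conditions:
\begin{itemize}
\item[(a)] $pq\mid u^2-v^2$;
\item[(b)] $pq\nmid u-v$;
\item[(c)] $pq\nmid (u+v)^k$ for all $k$.
\end{itemize}
Since $p,q$ are distinct primes, $pq\mid (u+v)^k$ holds exactly when $p\mid u+v$ and $q\mid u+v$. So (c) just says that $u+v$ avoids one of $p,q$.

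\textbf{Choice of $u,v$.} We require
\[
u-v\equiv 0 \pmod p,\qquad u+v\equiv 0 \pmod q,\qquad u-v\not\equiv 0\pmod q,\qquad u+v\not\equiv 0\pmod p.
\]
Then (a) holds because $p\mid u-v$ and $q\mid u+v$. Condition (b) holds since $q\nmid u-v$, and (c) holds since $p\nmid u+v$.

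To realise this, pick integers $s,t$ with
\[
s\equiv 0\pmod p,\quad s\equiv 1\pmod q,\qquad t\equiv 1\pmod p,\quad t\equiv 0\pmod q,
\]
using the Chinese Remainder Theorem. We want $u-v=s$ and $u+v=t$, which requires $s\equiv t\pmod 2$. We can arrange this by replacing $t$ with $t+pq$ if necessary, without changing its residues mod $p$ and $q$. Then set $u=(s+t)/2$ and $v=(t-s)/2$.

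Concretely, one may take $u=(s+t)/2$ directly with $s=p$ and suitable $t$. Alternatively, use the simpler pair $u=(p+q)/2$, $v=(q-p)/2$, which are integers since $p,q$ are odd. This gives $u-v=p$ and $u+v=q$, so $u^2-v^2=pq$, while $pq\nmid p$ and $pq\nmid q^k$.

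\textbf{Where oddness is used.} The only delicate point is the integrality of $u$ and $v$, and this is exactly where the hypothesis that both primes are odd enters. For $p=2$ the parity of $p+q$ fails, consistent with $2p^k$ being allowed. With $u=(p+q)/2$ and $v=(q-p)/2$ the verification is immediate, and the proof is complete.
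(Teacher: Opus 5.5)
Your proof is correct and is essentially the paper's argument. The paper also takes $x=\overline{n/(pq)}$, but it uses the doubled pair $u=p+q$, $v=p-q$ (so $u^2-v^2=4pq$, with $u-v=2q$ and $u+v=2p$); there the oddness of $p,q$ is spent on showing $2qt\neq 0$ and $(2p)^k t\neq 0$ rather than on the integrality of $(p\pm q)/2$, and your CRT detour is unnecessary since the explicit pair $u=(p+q)/2$, $v=(q-p)/2$ already suffices.
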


\begin{proof}
Assume that $\{0\}$ is gsdf-absorbing in $\mathbb{Z}_{n}$, and suppose, for
contradiction, that $n$ has at least two distinct odd prime divisors. Write
$n=pqt$, where $p$ and $q$ are distinct odd primes and $t\in\mathbb{N}$. Set
$u=p+q$, $v=p-q$, and $x=\overline{t}\in\mathbb{Z}_{n}$. Then $u^{2}%
-v^{2}=(p+q)^{2}-(p-q)^{2}=4pq,$ so that $(u^{2}-v^{2})x=4pq\,\overline
{t}=\overline{4pqt}=\overline{0}\text{ in }\mathbb{Z}_{n}.$ However,
$(u-v)x=2q\,\overline{t}=\overline{2qt}\neq\overline{0}\text{ and }%
(u+v)^{k}x=(2p)^{k}\,\overline{t}\neq\overline{0}\text{ for all }k\geq1.$ This
contradicts to the gsdf-absorbing property of $\{0\}$. Therefore, $n$ has at
most one odd prime divisor.
\end{proof}

\begin{lemma}
\label{l2} Let $n \in\mathbb{N}$. If $n = 2^{k} p^{s}$ with $k \ge2$, $s \ge
1$, and $p$ an odd prime, then the zero submodule $\{0\}$ is \emph{not} a
gsdf-absorbing submodule of the $\mathbb{Z}$-module $\mathbb{Z}_{n}$.
\end{lemma}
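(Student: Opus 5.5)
The plan is to refute the gsdf-absorbing property directly, by exhibiting explicit $u,v\in\mathbb{Z}$ and $x\in\mathbb{Z}_{n}$ with $(u^{2}-v^{2})x=\overline{0}$, $(u-v)x\neq\overline{0}$, and $(u+v)^{m}x\neq\overline{0}$ for every $m\geq1$. This parallels Lemma \ref{l1}. The natural approach is to choose the two factors $a=u+v$ and $b=u-v$ first, and then recover $u=\frac{a+b}{2}$ and $v=\frac{a-b}{2}$. For this we need $a\equiv b \pmod 2$.

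The requirement that $a^{m}x\neq\overline{0}$ for all $m$ suggests taking $a$ coprime to $p$, with $x$ not divisible by $p^{s}$. Then no power of $a$ can absorb the odd part of $n$. Concretely, I take $x=\overline{1}$, $a=2$ and $b=2^{k-1}p^{s}$. Since $k\geq2$, $b$ is even, so $a$ and $b$ have the same parity. This gives the integers
\[
u=1+2^{k-2}p^{s},\qquad v=1-2^{k-2}p^{s},
\]
with $u+v=2$ and $u-v=2^{k-1}p^{s}$.

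The verification then has three steps:
\begin{itemize}
\item First, $(u^{2}-v^{2})x=(u+v)(u-v)\overline{1}=\overline{2^{k}p^{s}}=\overline{0}$ in $\mathbb{Z}_{n}$.
\item Second, $(u-v)x=\overline{2^{k-1}p^{s}}\neq\overline{0}$, since $0<2^{k-1}p^{s}<n$.
\item Third, $(u+v)^{m}x=\overline{2^{m}}$, and this is nonzero for every $m\geq1$, because $n\mid 2^{m}$ would force $p\mid 2^{m}$, which is impossible for odd $p$.
\end{itemize}
Hence $\{0\}$ fails the definition, and so it is not gsdf-absorbing.

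The only delicate point is the parity constraint, which is needed for $u$ and $v$ to be integers. This is exactly where the hypothesis $k\geq2$ enters: when $k=1$, $b=2^{k-1}p^{s}$ would be odd while $a=2$ is even, and the construction would break down. This is consistent with the introduction's claim that $n=2p^{k}$ behaves well. No other obstacle is expected.
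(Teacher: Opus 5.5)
Your proof is correct and follows the same route as the paper: you give an explicit witness with $x=\overline{1}$, where $u+v$ is a power of $2$ (so no power of it annihilates $\overline{1}$, since $p$ is odd) and $u-v$ is a nonzero residue carrying the factor $p^{s}$. The only difference is how you split the factor $2^{k}$: the paper takes $u=2^{k-2}+p^{s}$ and $v=2^{k-2}-p^{s}$, giving $u+v=2^{k-1}$ and $u-v=2p^{s}$, while you take $u+v=2$ and $u-v=2^{k-1}p^{s}$, which makes no difference to the argument.
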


\begin{proof}
Set $u=2^{k-2}+p^{s},\ v=2^{k-2}-p^{s},\ x=\overline{1}\in\mathbb{Z}_{n}.$
Then $(u^{2}-v^{2})\cdot\overline{1}=2^{k}\cdot p^{s}\,\overline{1}%
=\overline{0}\quad\text{in }\mathbb{Z}_{n}.$ However, $(u-v)\cdot\overline
{1}=2p^{s}\,\overline{1}\neq\overline{0},\quad\text{and}\quad(u+v)^{t}%
\cdot\overline{1}=2^{(k-1)t}\,\overline{1}\neq\overline{0}\text{ for all
}t\geq1.$ Hence, the zero submodule $\{0\}$ fails the gsdf-absorbing condition
in $\mathbb{Z}_{n}$.
\end{proof}

We are now ready to establish the main characterization.

\begin{theorem}
\label{t3} The zero submodule is a gsdf-absorbing submodule of the
$\mathbb{Z}$-module $\mathbb{Z}_{n}$ if and only if $n=p^{k}\ \text{with $p$
prime, or}\ n=2p^{k}\ \text{with $p$ an odd prime}.$
\end{theorem}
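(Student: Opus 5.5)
The plan is to derive necessity from Lemma \ref{l1} and Lemma \ref{l2}, and to prove sufficiency with the ideal-theoretic characterization in Theorem \ref{eq}, (1)$\Leftrightarrow$(3). Throughout, $n\geq 2$, since otherwise $\{0\}=\mathbb{Z}_n$ is not proper.

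\emph{Necessity.} Assume $\{0\}$ is gsdf-absorbing in $\mathbb{Z}_n$. By Lemma \ref{l1}, $n=2^{a}p^{s}$ with $p$ an odd prime and $a,s\geq 0$. If $s=0$, then $n=2^{a}$ is a prime power. If $a=0$, then $n=p^{s}$. If $a=1$, then $n=2p^{s}$. The only remaining case is $a\geq 2$ and $s\geq 1$, which Lemma \ref{l2} excludes. So $n$ has one of the stated forms.

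\emph{Sufficiency.} By Theorem \ref{eq}, it suffices to show that for every $\overline{x}\neq\overline{0}$ in $\mathbb{Z}_n$, the ideal $(0:_{\mathbb{Z}}\overline{x})=d\mathbb{Z}$ is sdf-absorbing primary. Here $d=n/\gcd(n,x)$ is a divisor of $n$ with $d>1$. Concretely, we must show: if $(u-v)(u+v)\in d\mathbb{Z}$ and $u-v\notin d\mathbb{Z}$, then $(u+v)^{k}\in d\mathbb{Z}$ for some $k\geq1$.

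\emph{Case $n=p^{k}$.} Here $d=p^{j}$ with $j\geq1$, so $d\mathbb{Z}$ is a primary ideal with radical $p\mathbb{Z}$. If $u-v\notin p^{j}\mathbb{Z}$, then $p\mid u+v$. Indeed, if $p\nmid u+v$, then $u+v$ is coprime to $p^{j}$, so $p^{j}\mid(u-v)(u+v)$ would force $p^{j}\mid u-v$. Hence $(u+v)^{j}\in p^{j}\mathbb{Z}$. This covers $p=2$ as well.

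\emph{Case $n=2p^{k}$, $p$ odd.} Here $d=2^{a}p^{j}$ with $a\in\{0,1\}$. If $a=0$, the previous argument applies. If $a=1$, the key observation is that $u-v\equiv u+v \pmod 2$, since they differ by $2v$. Because $2\mid(u-v)(u+v)$, both factors are even.

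If $j=0$, then $u-v\in 2\mathbb{Z}=d\mathbb{Z}$, which contradicts the assumption; so this subcase cannot occur. If $j\geq1$ and $p\nmid u+v$, then $p^{j}\mid u-v$; together with $2\mid u-v$ this gives $2p^{j}\mid u-v$, again a contradiction. Hence $p\mid u+v$ and $2\mid u+v$, so $(u+v)^{j}\in 2p^{j}\mathbb{Z}$.

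The main obstacle is this parity step in the case $d=2p^{j}$. It is exactly where the single factor of $2$ is absorbed. It also explains why Lemma \ref{l2} kills $4p^{s}$: there, $u-v$ and $u+v$ can both be even while the $2$-power splits between them.
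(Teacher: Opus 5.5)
Your proof is correct. The necessity half is the paper's own argument: Lemmas \ref{l1} and \ref{l2} with the same case split on $n=2^{a}p^{s}$.

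For sufficiency you take a somewhat different route. The paper works directly with $x=\overline{t}\in\mathbb{Z}_{n}$. For $n=p^{k}$ it applies primariness of $p^{k}\mathbb{Z}$ to the product $(u-v)t\cdot(u+v)$. For $n=2p^{k}$ it splits $\mathbb{Z}_{n}\cong\mathbb{Z}_{2}\times\mathbb{Z}_{p^{k}}$ and uses that in the $\mathbb{Z}_{2}$-component both alternatives hold at once, so whichever one holds in the $\mathbb{Z}_{p^{k}}$-component lifts to $\mathbb{Z}_{n}$. That simultaneity is essential: by the paper's $10\mathbb{Z}\times 9\mathbb{Z}$ example, products of gsdf-absorbing submodules need not be gsdf-absorbing.

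You instead use the direction (3)$\Rightarrow$(1) of Theorem \ref{eq}, which is really just an unpacking of the definition, to reduce to the annihilator ideals $d\mathbb{Z}$ with $1<d\mid n$. You then verify the condition for each $d=p^{j}$ or $d=2p^{j}$ by coprimality, and the CRT step is replaced by the observation $u-v\equiv u+v \pmod 2$.

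The decisive parity fact is the same in both proofs. Your version is uniform in $x$, gives an explicit exponent $j$, and avoids the somewhat loose bookkeeping of the $\mathbb{Z}_{2}$-component in the paper. The paper's version stays at the module level and shows that the $\mathbb{Z}_{2}$ factor behaves like a characteristic-$2$ module, in which every proper submodule is gsdf-absorbing.
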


\begin{proof}
First, consider $n=p^{k}$ with $p$ prime. Let $u,v\in\mathbb{Z}$ and
$x=\overline{t}\in\mathbb{Z}_{p^{k}}$ such that $(u^{2}-v^{2})\overline
{t}=\overline{0}.$ Then $p^{k}\mid(u-v)(u+v)t$. Since $(p^{k})$ is a primary
ideal of $\mathbb{Z}$, we have either $p^{k}\mid(u-v)t$ or $p\mid(u+v)$. If
$p^{k}\mid(u-v)t$, then $(u-v)x=\overline{0}$ in $\mathbb{Z}_{p^{k}}$. If
$p\mid(u+v)$, write $u+v=pc$ for some $c\in\mathbb{Z}$. Then $(u+v)^{k}%
x=p^{k}c^{k}x=\overline{0}$ in $\mathbb{Z}_{p^{k}}$. Hence, $(u-v)x=\overline
{0}$ or $(u+v)^{k}x=\overline{0}$, proving that $\{0\}$ is gsdf-absorbing in
$\mathbb{Z}_{p^{k}}$.

Next, let $n = 2 p^{k}$ with $p$ an odd prime and $k \ge1$. We will show that
$\{0\}$ is gsdf-absorbing in $\mathbb{Z}_{2 p^{k}}$. Let $u, v \in\mathbb{Z}$
and $x = \overline{t} \in\mathbb{Z}_{2 p^{k}}$ satisfy $(u ^{2} - v ^{2})
\overline{t} = \overline{0} \in\mathbb{Z}_{2 p^{k}}. $

By the Chinese Remainder Theorem, we have $\mathbb{Z}_{n}\cong\mathbb{Z}%
_{2}\times\mathbb{Z}_{p^{k}},$ so every element $x\in\mathbb{Z}_{n}$
corresponds to a pair $(x_{2},x_{p})$ with $x_{2}\in\mathbb{Z}_{2}$ and
$x_{p}\in\mathbb{Z}_{p^{k}}$. Suppose $(u^{2}-v^{2})x=\overline{0}%
\quad\text{in }\mathbb{Z}_{n}.$ Under the decomposition, this is equivalent
to
\[
(u_{2}^{2}-v_{2}^{2})x_{2}=\overline{0}\quad\text{in }\mathbb{Z}_{2}%
,\quad\text{and}\quad(u_{p}^{2}-v_{p}^{2})x_{p}=\overline{0}\quad\text{in
}\mathbb{Z}_{p^{k}}.
\]

Hence, by the previous arguments, we have
\[
(u_{2}-v_{2})x_{2}=\overline{0}\ \text{or}\ (u_{2}+v_{2})^{n}x_{2}%
=\overline{0}\quad\text{in }\mathbb{Z}_{2},
\]
and
\[
(u_{p}-v_{p})x_{p}=\overline{0}\ \text{or}\ (u_{p}+v_{p})^{r}x_{p}%
=\overline{0}\quad\text{in }\mathbb{Z}_{p^{k}},
\]
for some $n,r\geq1$. If $u_{2}$ and $v_{2}$ have the same parity, then both
$(u_{2}-v_{2})x_{2}=\overline{0}$ and $(u_{2}+v_{2})^{n}x_{2}=\overline{0}$
hold. If $u_{2}$ and $v_{2}$ have different parity, then $2\mid x_{2}$, so
again $(u_{2}-v_{2})x_{2}=\overline{0}$ and $(u_{2}+v_{2})^{n}x_{2}%
=\overline{0}$ for some $n\geq2$. Therefore, $\{0\}$ is a gsdf-absorbing
submodule of $\mathbb{Z}_{n}$ when $n=2p^{k}$ with $p$ an odd prime.

The converse  part follows directly from Lemmas \ref{l1} and \ref{l2}.
\end{proof}

\section{Stability of Gsdf-absorbing Submodules}

In this section, we study the stability of gsdf-absorbing submodules under
localization, module homomorphisms, and direct products. Let $M$ be an
$R$-module and $S$ a multiplicatively closed subset of $R$. A proper submodule
$N$ of $M$ is called \emph{$S$-saturated} if for any $s\in S$ and $x\in M$, $s
x \in N $ imply $x \in N. $

\begin{proposition}
\label{S} Let $S$ be a multiplicatively closed subset of $R$ and let $N$ be a
proper submodule of the $R$-module $M$. Then the following statements hold:

\begin{enumerate}
\item If $N$ is a gsdf-absorbing submodule of $M$ and $S^{-1}N \neq S^{-1}M$,
then $S^{-1}N$ is a gsdf-absorbing submodule of $S^{-1}R$-module $S^{-1}M$.

\item If $S^{-1}N$ is a gsdf-absorbing submodule of $S^{-1}M$ and $N$ is
$S$-saturated, then $N$ is a gsdf-absorbing submodule of $M$.
\end{enumerate}
\end{proposition}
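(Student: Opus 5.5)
Both parts go straight from the definition, with denominators cleared and the hypothesis of one module transferred to the other. The only care needed is that the exponent $k$ and the elements of $S$ behave well.

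For (1), I take $u/s_1, v/s_2 \in S^{-1}R$ and $x/s_3 \in S^{-1}M$ with $\bigl((u/s_1)^2-(v/s_2)^2\bigr)x/s_3 \in S^{-1}N$. First I put the two fractions over a common denominator: set $a=us_2$, $b=vs_1$, $s=s_1s_2$. Then $u/s_1-v/s_2=(a-b)/s$, $u/s_1+v/s_2=(a+b)/s$, and the condition reads $(a^2-b^2)x/(s^2s_3)\in S^{-1}N$. Hence there is $t\in S$ with $t(a^2-b^2)x\in N$, i.e. $(a^2-b^2)(tx)\in N$. Applying the gsdf-absorbing property of $N$ to the element $tx\in M$ gives $(a-b)tx\in N$ or $(a+b)^k tx\in N$ for some $k\ge1$. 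In the first case $(u/s_1-v/s_2)(x/s_3)=(a-b)tx/(t s s_3)\in S^{-1}N$. In the second, $(u/s_1+v/s_2)^k(x/s_3)=(a+b)^k tx/(t s^k s_3)\in S^{-1}N$. Properness of $S^{-1}N$ is assumed, so $S^{-1}N$ is gsdf-absorbing. The key point here is to absorb the witness $t$ into the module element rather than into $u,v$; this keeps the factorization $u^2-v^2=(u-v)(u+v)$ intact.

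For (2), $N$ is proper by hypothesis. Let $u,v\in R$ and $x\in M$ with $(u^2-v^2)x\in N$. Then $\bigl((u/1)^2-(v/1)^2\bigr)(x/1)\in S^{-1}N$, so by hypothesis either $(u-v)x/1\in S^{-1}N$ or $(u+v)^k x/1\in S^{-1}N$ for some $k$.

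The one step needing care is descending from $S^{-1}N$ back to $N$. If $y/1\in S^{-1}N$, then $y/1=n/s$ with $n\in N$, so $t(sy-n)=0$ for some $t\in S$, whence $(ts)y=tn\in N$. Since $ts\in S$ and $N$ is $S$-saturated, $y\in N$. Applying this to $y=(u-v)x$ or $y=(u+v)^kx$ gives $(u-v)x\in N$ or $(u+v)^kx\in N$, so $N$ is gsdf-absorbing. I expect no real obstacle in either part beyond this saturation step and the bookkeeping of denominators in (1).
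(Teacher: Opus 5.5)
Your proposal is correct and follows essentially the same route as the paper. In (1) you clear denominators and absorb the witness $t\in S$ into the module element $tx$, which is exactly the paper's $[(ud)^2-(vt)^2](s'x)\in N$. In (2) you pull back via $x/1$ and use $S$-saturation; your explicit descent step ($y/1\in S^{-1}N$ gives $(ts)y\in N$) just spells out what the paper leaves implicit.
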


\begin{proof}
(1) Let $\frac{x}{s}\in S^{-1}M$ and $\frac{u}{t},\frac{v}{d}\in S^{-1}R$ be
such that $\left(  \left(  \frac{u}{t}\right)  ^{2}-\left(  \frac{v}%
{d}\right)  ^{2}\right)  \frac{x}{s}\in S^{-1}N.$ Then there exists
$s^{\prime}\in S$ such that $s^{\prime}(u^{2}d^{2}-v^{2}t^{2})x=[(ud)^{2}%
-(vt)^{2}](s^{\prime}x)\in N.$ Since $N$ is gsdf-absorbing, we have
$(ud-vt)(s^{\prime}x)\in N\text{ or }(ud+vt)^{k}(s^{\prime}x)\in N$ for some
$k\geq1$. Hence, $\left(  \frac{u}{t}-\frac{v}{d}\right)  \frac{x}{s}%
=\frac{s^{\prime}(ud-vt)x}{s^{\prime}tds}S^{-1}N$ or $\left(  \frac{u}%
{t}+\frac{v}{d}\right)  ^{k}\frac{x}{s}=\frac{s^{\prime}(ud+vt)^{k}%
x}{s^{\prime}(td)^{k}s}S^{-1}N$, and $S^{-1}N$ is gsdf-absorbing in $S^{-1}M$.

(2) Let $x\in M$ and $u,v\in R$ be such that $(u^{2}-v^{2})x\in N$. Then
$\frac{u^{2}-v^{2}}{1}\frac{x}{1}\in S^{-1}N.$ Since $S^{-1}N$ is
gsdf-absorbing in $S^{-1}M$, either $\frac{u-v}{1}\frac{x}{1}\in
S^{-1}N\ \text{or}\ \left(  \frac{u+v}{1}\right)  ^{k}\frac{x}{1}\in S^{-1}N$
for some $k\in\mathbb{Z}^{+}$. If $\frac{u-v}{1}\frac{x}{1}\in S^{-1}N$, then
there exists $s\in S$ such that $s(u-v)x\in N$. Because $N$ is $S$-saturated,
it follows that $(u-v)x\in N$. Similarly, if $\left(  \frac{u+v}{1}\right)
^{k}\frac{x}{1}\in S^{-1}N$, then for some $s^{\ast}\in S$ we have $s^{\ast
}(u+v)^{k}x\in N$, for some $k\geq1.$ Using $S$-saturation again, we conclude
$(u+v)^{k}x\in N$. Therefore, $N$ is a gsdf-absorbing submodule of $M$.
\end{proof}

\begin{proposition}
\label{f} Let $f: M \to M^{\prime}$ be an $R$-module epimorphism, and let $N
\subsetneq M$, $N^{\prime}\subsetneq M^{\prime}$ be proper submodules. Then:

\begin{enumerate}
\item If $N$ is a gsdf-absorbing submodule of $M$ with $\ker(f)\subseteq N$,
then $f(N)$ is a gsdf-absorbing submodule of $M^{\prime}$.

\item If $N^{\prime}$ is a gsdf-absorbing submodule of $M^{\prime}$ and
$f^{-1}(N^{\prime}) \neq M$, then $f^{-1}(N^{\prime})$ is a gsdf-absorbing
submodule of $M$.
\end{enumerate}
\end{proposition}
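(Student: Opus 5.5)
The plan is to verify the defining condition directly in each part, using surjectivity of $f$ for (1) and the fact that $f$ is $R$-linear for (2).

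For (1), I would first check that $f(N)$ is proper. If $f(N)=M'$, then for any $m\in M$ there is $n\in N$ with $f(m)=f(n)$. Hence $m-n\in\ker(f)\subseteq N$, so $m\in N$, contradicting $N\neq M$.

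Next, take $u,v\in R$ and $y\in M'$ with $(u^2-v^2)y\in f(N)$. By surjectivity I write $y=f(x)$ for some $x\in M$. Then $f((u^2-v^2)x)=f(n)$ for some $n\in N$. This gives $(u^2-v^2)x-n\in\ker(f)\subseteq N$, and so $(u^2-v^2)x\in N$.

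Since $N$ is gsdf-absorbing, either $(u-v)x\in N$ or $(u+v)^kx\in N$ for some $k\ge 1$. Applying $f$ gives $(u-v)y\in f(N)$ or $(u+v)^ky\in f(N)$, as required. The only point needing care is the step using $\ker(f)\subseteq N$ to pull membership back from $f(N)$ to $N$.

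For (2), properness of $f^{-1}(N')$ is given by hypothesis. Take $u,v\in R$ and $x\in M$ with $(u^2-v^2)x\in f^{-1}(N')$. Then $(u^2-v^2)f(x)=f((u^2-v^2)x)\in N'$. Because $N'$ is gsdf-absorbing, either $(u-v)f(x)\in N'$ or $(u+v)^kf(x)\in N'$ for some $k\ge1$.

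By linearity these say $f((u-v)x)\in N'$ or $f((u+v)^kx)\in N'$. So $(u-v)x\in f^{-1}(N')$ or $(u+v)^kx\in f^{-1}(N')$. Surjectivity of $f$ is not needed for this part.

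I expect no genuine obstacle. The main subtlety is the properness of $f(N)$ in (1), which again uses $\ker(f)\subseteq N$.
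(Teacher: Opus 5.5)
Your proof is correct and follows the paper's argument essentially line by line. In (1) you lift via surjectivity and use $\ker(f)\subseteq N$ to get $(u^2-v^2)x\in N$, and in (2) you use $R$-linearity of $f$; your additional check that $f(N)\neq M'$, again via $\ker(f)\subseteq N$, is a detail the paper leaves implicit and is worth keeping.
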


\begin{proof}
(1) Let $u,v\in R$ and $x^{\prime}\in M^{\prime}$ such that $(u^{2}%
-v^{2})x^{\prime}\in f(N)$. Since $f$ is surjective, there exists $x\in M$
with $f(x)=x^{\prime}$. Then $f((u^{2}-v^{2})x)=(u^{2}-v^{2})x^{\prime}\in
f(N)$ and so $(u^{2}-v^{2})x\in N+\ker(f)\subseteq N.$ As $N$ is
gsdf-absorbing, $(u-v)x\in N$ or $(u+v)^{k}x\in N$ for some $k\geq1$. Hence,
we get either $(u-v)x^{\prime}=f((u-v)x)\in f(N)\text{ or }(u+v)^{k}x^{\prime
k}x)\in f(N),$ proving that $f(N)$ is gsdf-absorbing in $M^{\prime}$.

(2) Let $u,v\in R$ and $m\in M$ such that $(u^{2}-v^{2})x\in f^{-1}(N^{\prime
})$, i.e., $f((u^{2}-v^{2})x)=(u^{2}-v^{2})f(x)\in N^{\prime}$. Since
$N^{\prime}$ is gsdf-absorbing either $(u-v)f(x)=f((u-v)x)\in N^{\prime}$ or
$(u+v)^{k}f(x)=f((u+v)^{k}x)\in N^{\prime}$ for some $k\geq1$. Hence,
$(u-v)x\in f^{-1}(N^{\prime})\text{ or }(u+v)^{k}x\in f^{-1}(N^{\prime}).$
Therefore, $f^{-1}(N^{\prime})$ is gsdf-absorbing in $M$.
\end{proof}

As a consequence of the previous theorem, we present the following results on
division modules.

\begin{corollary}
\label{C43} Let $R$ be a ring and $M_{1},$ $M_{2}$ be $R$-modules.

\begin{enumerate}
\item If $M_{1} \subseteq M_{2}$ and $N$ is a gsdf-absorbing submodule of
$M_{2}$, with $N \neq M_{1}$, then $N \cap M_{1}$ is a gsdf-absorbing
submodule of $M_{1}$.

\item Let $K\subseteq N$ be proper submodules of $M_{1}$. Then $N$ is a
gsdf-absorbing submodule of $M_{1}$ if and only if $N/K$ is a gsdf-absorbing
submodule of $M_{1}/K$.
\end{enumerate}
\end{corollary}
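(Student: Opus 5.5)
Both parts follow from Proposition \ref{f}, applied to a suitable epimorphism or handled by the same direct argument.

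For (1), I will argue directly, since the inclusion $M_{1}\hookrightarrow M_{2}$ is not an epimorphism. The hypothesis $N\neq M_{1}$ should be read as $M_{1}\nsubseteq N$, so that $N\cap M_{1}$ is a proper submodule of $M_{1}$; I will note this at the outset. Now take $u,v\in R$ and $x\in M_{1}$ with $(u^{2}-v^{2})x\in N\cap M_{1}$. Since $x\in M_{2}$ and $N$ is gsdf-absorbing in $M_{2}$, either $(u-v)x\in N$ or $(u+v)^{k}x\in N$ for some $k\geq1$. Both of these elements lie in $M_{1}$, because $M_{1}$ is a submodule containing $x$. Hence $(u-v)x\in N\cap M_{1}$ or $(u+v)^{k}x\in N\cap M_{1}$, which is what we need. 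Equivalently, one can view this as the preimage statement in part (2) of Proposition \ref{f}, with the inclusion map in place of an epimorphism; that proof never used surjectivity.

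For (2), I will use the canonical epimorphism $\pi:M_{1}\to M_{1}/K$. Since $K\subseteq N$, we have $\ker\pi=K\subseteq N$. Also $N\neq M_{1}$ forces $N/K\neq M_{1}/K$, so both $N$ and $N/K$ are proper.

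For the forward direction, Proposition \ref{f}(1) gives directly that $\pi(N)=N/K$ is gsdf-absorbing.

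For the converse, assume $N/K$ is gsdf-absorbing. Since $\pi^{-1}(N/K)=N+K=N\neq M_{1}$, Proposition \ref{f}(2) gives that $N$ is gsdf-absorbing.

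I expect no real obstacle here. The only care needed is checking properness in each case, and interpreting the hypothesis in (1) as $M_{1}\nsubseteq N$.
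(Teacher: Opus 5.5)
Your proof is correct and follows the paper's intended route. The paper gives no separate argument and presents the corollary as a consequence of Proposition \ref{f}. Your part (2) is exactly Proposition \ref{f}(1) and (2) applied to the canonical map $M_{1}\to M_{1}/K$, and your direct argument for (1) is the preimage argument of Proposition \ref{f}(2) applied to the non-surjective inclusion.

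You are also right that the hypothesis in (1) must be read as $M_{1}\nsubseteq N$. For example, with $M_{2}=\mathbb{Z}$, $N=2\mathbb{Z}$ and $M_{1}=4\mathbb{Z}$, we have $N\neq M_{1}$, but $N\cap M_{1}=M_{1}$ is not proper.
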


We are now ready to completely determine all gsdf-absorbing submodules of
$\mathbb{Z}$-module $\mathbb{Z}$.

\begin{theorem}
\label{Zn}A proper submodule $N=n\mathbb{Z}$ of $\mathbb{Z}$-module
$\mathbb{Z}$ is a gsdf-absorbing if and only if $n=p^{k}\ \text{with $p$
prime, or}\ n=2p^{k}\ \text{with $p$ an odd prime.}$
\end{theorem}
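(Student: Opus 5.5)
The plan is to reduce the statement to Theorem \ref{t3} by means of the quotient correspondence in Corollary \ref{C43}(2). Take $N=n\mathbb{Z}$ with $n\geq 2$, which is proper and nonzero. Set $K=N$ and apply the correspondence to $N/N=\{0\}$ in $\mathbb{Z}/n\mathbb{Z}\cong\mathbb{Z}_n$. This gives: $n\mathbb{Z}$ is gsdf-absorbing in $\mathbb{Z}$ if and only if the zero submodule is gsdf-absorbing in $\mathbb{Z}_n$. Theorem \ref{t3} then says exactly that this holds if and only if $n=p^k$ with $p$ prime, or $n=2p^k$ with $p$ an odd prime.

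There is a technical point in the hypotheses of Corollary \ref{C43}(2). That corollary is stated for $K\subseteq N$ proper submodules, so $K=N$ is allowed. I would still check the equivalence directly, since it is a one-line computation. For $u,v,t\in\mathbb{Z}$ we have $(u^2-v^2)t\in n\mathbb{Z}$ if and only if $(u^2-v^2)\overline{t}=\overline{0}$ in $\mathbb{Z}_n$. The same equivalence holds with $(u-v)t$ and with $(u+v)^k t$ in place of $(u^2-v^2)t$. Hence the two gsdf-absorbing conditions are literally the same statement about integers. Alternatively, one can apply Proposition \ref{f}(1)--(2) to the canonical epimorphism $\mathbb{Z}\to\mathbb{Z}_n$, whose kernel is $n\mathbb{Z}\subseteq N$.

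The zero ideal $N=0\mathbb{Z}$ is also proper in $\mathbb{Z}$, so it needs a remark. It is gsdf-absorbing, since it is prime: $(u^2-v^2)x=0$ forces $x=0$, $u=v$, or $u=-v$. Presumably the statement intends $n\geq 2$, or "$p^k$" implicitly covers the prime ideal case. I would note this explicitly, say in the sense that $0=p^k\mathbb{Z}$ is excluded and treated separately as a prime submodule.

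The only real obstacle is this bookkeeping: correctly justifying the passage to the quotient and handling $n=0$. The arithmetic content is entirely contained in Theorem \ref{t3} together with Lemmas \ref{l1} and \ref{l2}.
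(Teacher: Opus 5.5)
Your proposal is correct and follows the paper's proof: set $K=N$ in Corollary \ref{C43}(2) to reduce to the zero submodule of $\mathbb{Z}_n$, then invoke Theorem \ref{t3}. Your direct integer-level check of the reduction is a sound addition. So is your remark that $0\mathbb{Z}$ is a proper, prime, hence gsdf-absorbing submodule that the stated list does not literally cover; the paper tacitly assumes $n\geq 2$.
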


\begin{proof}
Suppose that $N=n\mathbb{Z}$ is a gsdf-absorbing submodule of $\mathbb{Z}%
$-module $\mathbb{Z}$. Put $K=N=n\mathbb{Z}$ in Corollary \ref{C43}. Then,
$\{0\}=N/N$ is a gsdf-absorbing submodule of $\mathbb{Z}/n\mathbb{Z}\cong \mathbb{Z}_{n}.$
From Theorem \ref{t3}, we have $n=p^{k}\ $with $p$ prime, or$\ n=2p^{k}\ $with
$p$ an odd prime. The converse also follows from Theorem \ref{t3} and
Corollary \ref{C43}$.$
\end{proof}

\begin{proposition}
\label{P44} Let $M$ be an $R$-module.

\begin{enumerate}
\item Let $N_{1}$ and $N_{2}$ be gsdf-absorbing submodules of $M$ such that
$\sqrt{(N_{1}:_{R}x)}=\sqrt{(N_{2}:_{R}x)}$ for all $x\in M\setminus(N_{1}\cap
N_{2}).$ Then $N_{1}\cap N_{2}$ is a gsdf-absorbing submodule of $M$.

\item Let $\{N_{i}\}_{i\in I}$ be a directed family of gsdf-absorbing
submodules of $M$. Then the union $N=\bigcup_{i\in I}N_{i}$ is a
gsdf-absorbing submodule of $M$.
\end{enumerate}
\end{proposition}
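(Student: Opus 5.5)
For part (1), the plan is a direct verification from the definition, using the radical hypothesis only in the one "mixed" case. Properness is clear, since $N_{1}\cap N_{2}\subseteq N_{1}\subsetneq M$.

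Let $u,v\in R$ and $x\in M$ with $(u^{2}-v^{2})x\in N_{1}\cap N_{2}$. If $x\in N_{1}\cap N_{2}$, then $(u-v)x\in N_{1}\cap N_{2}$ and we are done. So assume $x\in M\setminus(N_{1}\cap N_{2})$; this is exactly the situation where the hypothesis $\sqrt{(N_{1}:_{R}x)}=\sqrt{(N_{2}:_{R}x)}$ is available. Since each $N_{i}$ is gsdf-absorbing, for $i=1,2$ either $(u-v)x\in N_{i}$ or $(u+v)^{k_{i}}x\in N_{i}$ for some $k_{i}\geq1$. I split into cases.
\begin{itemize}
\item If $(u-v)x\in N_{1}$ and $(u-v)x\in N_{2}$, then $(u-v)x\in N_{1}\cap N_{2}$.
\item If $(u+v)^{k_{1}}x\in N_{1}$ and $(u+v)^{k_{2}}x\in N_{2}$, set $k=\max\{k_{1},k_{2}\}$. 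Multiplying by the appropriate power of $u+v$ gives $(u+v)^{k}x\in N_{1}\cap N_{2}$.
\item In the mixed case, say $(u-v)x\in N_{1}$, $(u-v)x\notin N_{2}$ and $(u+v)^{k_{2}}x\in N_{2}$. Then $u+v\in\sqrt{(N_{2}:_{R}x)}=\sqrt{(N_{1}:_{R}x)}$. Hence $(u+v)^{m}x\in N_{1}$ for some $m\geq1$, and with $k=\max\{m,k_{2}\}$ we get $(u+v)^{k}x\in N_{1}\cap N_{2}$. The symmetric mixed case is identical.
\end{itemize}
This is the only step where the radical condition enters, and it is the heart of part (1).

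For part (2), let $(u^{2}-v^{2})x\in N=\bigcup_{i\in I}N_{i}$. Then $(u^{2}-v^{2})x\in N_{j}$ for some single $j$, since membership of one element in the union involves only one member. Because $N_{j}$ is gsdf-absorbing, either $(u-v)x\in N_{j}\subseteq N$ or $(u+v)^{k}x\in N_{j}\subseteq N$ for some $k\geq1$. Directedness guarantees that $N$ is a submodule: two elements of $N$ lie in some common $N_{l}$.

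The main obstacle in part (2) is not the absorbing condition but properness of $N$. A directed union of proper submodules can equal $M$, for instance for the $\mathbb{Z}$-module $\mathbb{Q}$. The argument therefore needs $N\neq M$, which I would either read as implicit in the statement or guarantee by an extra hypothesis. One sufficient hypothesis is that $M$ be finitely generated: then $N=M$ would force all generators into a single $N_{i}$, contradicting $N_{i}\neq M$. With that addressed, $N$ is a proper submodule satisfying the gsdf-absorbing condition.
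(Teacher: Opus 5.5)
Your argument is correct and is essentially the paper's: in (1) the same split into $x\in N_{1}\cap N_{2}$ versus $x\notin N_{1}\cap N_{2}$, with the radical equality used only to transfer $u+v\in\sqrt{(N_{i}:_{R}x)}$ from one $N_{i}$ to the other, and in (2) the same reduction of $(u^{2}-v^{2})x\in N$ to a single $N_{j}$. Your properness caveat in (2) is well founded and is something the paper's proof passes over. For instance, $N_{n}=p^{-n}\mathbb{Z}_{(p)}$ ($n\geq0$) is a chain of gsdf-absorbing submodules of the $\mathbb{Z}$-module $\mathbb{Q}$: for $x\notin N_{n}$ one gets $(N_{n}:_{\mathbb{Z}}x)=p^{m}\mathbb{Z}$ with $m\geq1$, which is primary. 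Its union is all of $\mathbb{Q}$, so an extra hypothesis such as finite generation of $M$ is genuinely needed.
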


\begin{proof}
(1) If $x\in N_{1}\cap N_{2}$, there is nothing to prove. So, assume $x\notin
N_{1}\cap N_{2}$ and let $u,v\in R$ be such that $(u^{2}-v^{2})x\in N_{1}\cap
N_{2}.$ Since $N_{1}$ and $N_{2}$ are gsdf-absorbing, for each $i=1,2$ we have
$(u-v)x\in N_{i}\text{ or }u+v\in\sqrt{(N_{i}:_{R}x)}.$ If $u+v\in\sqrt
{(N_{1}:_{R}x)}=\sqrt{(N_{2}:_{R}x)}$, then $u+v\in\sqrt{(N_{1}\cap N_{2}%
:_{R}x)}.$ Otherwise, if $u+v\notin\sqrt{(N_{i}:_{R}x)}$, then $(u-v)x\in
N_{1}\cap N_{2}$. In either case, the gsdf-absorbing condition holds for
$N_{1}\cap N_{2}$.

(2) This follows immediately from the definition of a directed union and the
fact that each $N_{i}$ is gsdf-absorbing.
\end{proof}

It is worth noting that both the intersection and the sum of two
gsdf-absorbing submodules do not necessarily remain gsdf-absorbing. A simple
counterexample arises in the $\mathbb{Z}$-module $\mathbb{Z}$ with submodules
$N_{1}=3\mathbb{Z}$ and $N_{2}=7\mathbb{Z}$. Observe that $(5^{2}-2^{2}%
)\cdot2\in N_{1}\cap N_{2},$ but $(5-2)\cdot2\notin N_{1}\cap N_{2},$
$(5+2)^{k}\cdot2\notin N_{1}\cap N_{2}\ \text{for any }k\geq1,$ which shows
that $N_{1}\cap N_{2}$ fails to be gsdf-absorbing. Additionally, the sum
$N_{1}+N_{2}=\mathbb{Z}$ is not a proper submodule, and hence it is not
gsdf-absorbing either.

\begin{proposition}
\label{cart_rewrite} Let $N_{1}$ and $N_{2}$ be proper submodules of
$R$-modules $M_{1}$ and $M_{2}$, respectively.
\end{proposition}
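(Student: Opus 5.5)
The statement as displayed breaks off after its hypotheses, so I assume it follows the standard pattern for direct products. The two claims I expect are: \emph{(a)} $N_{1}\times M_{2}$ is gsdf-absorbing in $M_{1}\times M_{2}$ if and only if $N_{1}$ is gsdf-absorbing in $M_{1}$, and symmetrically for $M_{1}\times N_{2}$; \emph{(b)} a statement about when $N_{1}\times N_{2}$ is gsdf-absorbing. My plan is to settle \emph{(a)} with Proposition \ref{f} and then treat \emph{(b)} elementwise using Theorem \ref{eq}.

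For \emph{(a)} I would use the projection $\pi_{1}:M_{1}\times M_{2}\to M_{1}$, which is an $R$-epimorphism. Its kernel is $0\times M_{2}\subseteq N_{1}\times M_{2}$, and $\pi_{1}^{-1}(N_{1})=N_{1}\times M_{2}$, which is proper because $N_{1}\neq M_{1}$.
\begin{itemize}
\item If $N_{1}\times M_{2}$ is gsdf-absorbing, then Proposition \ref{f}(1) shows that $\pi_{1}(N_{1}\times M_{2})=N_{1}$ is gsdf-absorbing.
\item If $N_{1}$ is gsdf-absorbing, then Proposition \ref{f}(2) shows that $\pi_{1}^{-1}(N_{1})=N_{1}\times M_{2}$ is gsdf-absorbing.
\item The case of $M_{1}\times N_{2}$ is identical, using $\pi_{2}$.
\end{itemize}
Alternatively, one can check directly that $(u^{2}-v^{2})(x_{1},x_{2})\in N_{1}\times M_{2}$ is equivalent to $(u^{2}-v^{2})x_{1}\in N_{1}$, since the second coordinate imposes no condition.

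For \emph{(b)}, the plan is to compare $(N_{1}\times N_{2}:_{R}(x_{1},x_{2}))=(N_{1}:_{R}x_{1})\cap(N_{2}:_{R}x_{2})$ with Theorem \ref{eq}(3).
\begin{itemize}
\item \textbf{Necessity.} If $N_{1}\times N_{2}$ is gsdf-absorbing, test it on elements $(x_{1},0)$ and $(0,x_{2})$. This shows that $N_{1}$ and $N_{2}$ are each gsdf-absorbing, or equal to the whole module if that is allowed.
\item \textbf{Sufficiency.} Take $(u^{2}-v^{2})(x_{1},x_{2})\in N_{1}\times N_{2}$ with $(u-v)(x_{1},x_{2})\notin N_{1}\times N_{2}$. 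The hard case is mixed behaviour: $(u-v)x_{1}\in N_{1}$, but only $(u+v)^{k}x_{2}\in N_{2}$. One then needs $(u+v)^{m}x_{1}\in N_{1}$, which is not automatic.
\end{itemize}

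So the expected correct criterion is a radical-compatibility condition, in the spirit of Proposition \ref{P44}(1). A natural form is that $u-v\in(N_{1}:_{R}x_{1})$ should force $u+v\in\sqrt{(N_{1}:_{R}x_{1})}$ whenever the second coordinate fails, or something similar. The cleanest version may be: $N_{1}\times N_{2}$ is gsdf-absorbing if and only if one factor is the whole module and the other is gsdf-absorbing. The same-radical example after Proposition \ref{P44} suggests this. To refute the general case I would build a counterexample modelled on that example, e.g.\ $3\mathbb{Z}\times7\mathbb{Z}$ in $\mathbb{Z}\times\mathbb{Z}$ with $u=5$, $v=2$. Then $u^{2}-v^{2}=21$, so $(u^{2}-v^{2})(1,1)\in3\mathbb{Z}\times7\mathbb{Z}$. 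However $(u-v)(1,1)=(3,3)$ fails in the second coordinate, and $(u+v)^{k}(1,1)=(7^{k},7^{k})$ fails in the first coordinate for every $k$.

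The main obstacle is the mixed-coordinate case in \emph{(b)}: identifying the exact extra hypothesis that makes sufficiency work and checking necessity against it. Part \emph{(a)} is routine given Proposition \ref{f}.
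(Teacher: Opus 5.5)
You have reconstructed exactly what the paper proves. Its items are: (1) if $N_{1}\times N_{2}$ is gsdf-absorbing, then so are $N_{1}$ and $N_{2}$; and (2)--(3) $N_{1}\times M_{2}$ (resp.\ $M_{1}\times N_{2}$) is gsdf-absorbing if and only if $N_{1}$ (resp.\ $N_{2}$) is. Testing on $(x_{1},0)$ and $(0,x_{2})$ is precisely the paper's proof of (1). Your alternative coordinatewise check is precisely its proof of (2)--(3). The route through $\pi_{1}$ and Proposition~\ref{f} is a valid repackaging of the same computation, and you correctly verified $\ker\pi_{1}=0\times M_{2}\subseteq N_{1}\times M_{2}$ and that the preimage is proper.

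The paper makes no sufficiency claim about $N_{1}\times N_{2}$. It only observes afterwards, using $10\mathbb{Z}\times 9\mathbb{Z}$, that the converse of (1) fails. Your $3\mathbb{Z}\times 7\mathbb{Z}$ example with $u=5$, $v=2$, $x=(1,1)$ correctly shows the same thing.

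Your tentative ``cleanest version'', however, is false and should be dropped. Since both factors are assumed proper, it would say that $N_{1}\times N_{2}$ is never gsdf-absorbing. Two counterexamples:
\begin{enumerate}
\item Over a ring of characteristic $2$, every proper submodule is gsdf-absorbing, so every $N_{1}\times N_{2}$ is.
\item In $\mathbb{Z}\times\mathbb{Z}$, the submodule $2\mathbb{Z}\times 3\mathbb{Z}$ is gsdf-absorbing. Because $u-v\equiv u+v \pmod 2$, the condition $(u^{2}-v^{2})x_{1}\in 2\mathbb{Z}$ forces both $(u-v)x_{1}$ and $(u+v)x_{1}$ into $2\mathbb{Z}$. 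Primality of $3$ then puts $(u-v)x_{2}$ or $(u+v)x_{2}$ into $3\mathbb{Z}$.
\end{enumerate}

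The correct general reformulation is the one you already pointed to. By Theorem~\ref{eq}, $N_{1}\times N_{2}$ is gsdf-absorbing if and only if $(N_{1}:_{R}x_{1})\cap(N_{2}:_{R}x_{2})$ is sdf-absorbing primary for every $(x_{1},x_{2})\notin N_{1}\times N_{2}$.
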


\begin{enumerate}
\item If $N_{1}\times N_{2}$ is a gsdf-absorbing submodule of $M_{1}\times
M_{2}$ then $N_{1}$ and $N_{2}$ are gsdf-absorbing submodules of $M_{1}$ and
$M_{2}$, respectively.

\item $N_{1}\times M_{2}$ is gsdf-absorbing in $M_{1}\times M_{2}$ if and only
if $N_{1}$ is gsdf-absorbing in $M_{1}$.

\item $M_{1}\times N_{2}$ is gsdf-absorbing in $M_{1}\times M_{2}$ if and only
if $N_{2}$ is gsdf-absorbing in $M_{2}$.
\end{enumerate}

\begin{proof}
(1) Assume $u,v\in R$ and $x_{1}\in M_{1}$ satisfy $(u^{2}-v^{2})x_{1}\in
N_{1}$. Then $(u^{2}-v^{2})(x_{1},0)\in N_{1}\times N_{2}$. Since $N_{1}\times
N_{2}$ is gsdf-absorbing, we must have either $(u-v)(x_{1},0)\in N_{1}\times
N_{2}$ or $(u+v)^{k}(x_{1},0)\in N_{1}\times N_{2}$ for some $k\geq1$. This
directly implies $(u-v)x_{1}\in N_{1}$ or $(u+v)^{k}x_{1}\in N_{1}$,
establishing that $N_{1}$ is gsdf-absorbing. Similary, $N_{2}$ is
gsdf-absorbing in $M_{2}.$

(2) The sufficiency part similar to (1). For the necessary part, let
$(x_{1},x_{2})\in M_{1}\times M_{2}$ and $u,v\in R$ be such that $(u^{2}%
-v^{2})(x_{1},x_{2})\in N_{1}\times M_{2}$. Then $(u^{2}-v^{2})x_{1}\in N_{1}%
$, and by the gsdf-absorbing property of $N_{1}$, we have either
$(u-v)x_{1}\in N_{1}$ or $(u+v)^{k}x_{1}\in N_{1}$ for some $k\geq1$. Hence,
$(u-v)(x_{1},x_{2})\in N_{1}\times M_{2}$ or $(u+v)^{k}(x_{1},x_{2})\in
N_{1}\times M_{2}$, proving that $N_{1}\times M_{2}$ is gsdf-absorbing in
$M_{1}\times M_{2}$.

(3) The argument for $M_{1}\times N_{2}$ is analogous to part (2).
\end{proof}

If $N_{1}$ and $N_{2}$ are gsdf-absorbing submodules of $M_{1}$ and $M_{2}$
respectively, then $N_{1}\times N_{2}$ need not be gsdf-absorbing in
$M_{1}\times M_{2}.$ Consider $N_{1}=10\mathbb{Z},\ N_{2}=9\mathbb{Z}%
,\ R=\mathbb{Z}=M_{1}=M_{2}.$ Then $N_{1}$ and $N_{2}$ are gsdf-absorbing in
$\mathbb{Z}$-module $\mathbb{Z}$ by Theorem \ref{Zn}. However, $N_{1}\times
N_{2}$ is not gsdf-absorbing as $(4^{2}-1^{2})(2,3)=(30,45)\in N_{1}\times
N_{2}$ but $(4-1)(2,3)=(6,9)\not \in N_{1}\times N_{2},$ and $(4+1)^{k}%
(2,3)=5^{k}(2,3)\not \in N_{1}\times N_{2},$ for all $k\geq1.$ Furthermore,
see the next example.

\begin{example}
Let $R$ be a ring with characteristic $\mathrm{char}(R)=2n-1$ for some integer
$n\geq2$, and let $M_{1}$ and $M_{2}$ be $R$-modules with proper submodules
$N_{1}\subset M_{1}$ and $N_{2}\subset M_{2}$. Even if both $N_{1}$ and
$N_{2}$ are gsdf-absorbing, the product submodule $N_{1}\times N_{2}$ of
$M_{1}\times M_{2}$ need not be gsdf-absorbing.
\end{example}

\begin{proof}
Choose elements in $R\times R$ by $u=(n\cdot1_{R},n\cdot1_{R}),\ v=(-n\cdot
1_{R},n\cdot1_{R}),$ and pick $x_{1}\in M_{1}\setminus N_{1}$ and $x_{2}\in
M_{2}\setminus N_{2}$. Observe that $u^{2}-v^{2}=(n^{2}\cdot1_{R},n^{2}%
\cdot1_{R})-(n^{2}\cdot1_{R},n^{2}\cdot1_{R})=(0_{R},0_{R}),$ so $(u^{2}%
-v^{2})(x_{1},x_{2})=(0,0)\in N_{1}\times N_{2}$. However, we have
$u-v=(2n\cdot1_{R},0_{R})=(1_{R},0_{R}),$ $u+v=(0_{R},2n\cdot1_{R}%
)=(0_{R},1_{R})$. Then $(u-v)(x_{1},x_{2})=(x_{1},0)\notin N_{1}\times N_{2},$
$(u+v)^{k}(x_{1},x_{2})=(0,x_{2})\notin N_{1}\times N_{2}$ for any $k\geq1$.
This shows that $N_{1}\times N_{2}$ fails the gsdf-absorbing condition, even
though both factors are gsdf-absorbing submodules.
\end{proof}

\section{Idealizations and Amalgamations}

In this section, we review two classical constructions in commutative algebra:
the idealization of a module and the amalgamation of rings along an ideal.
These constructions are often used to produce examples, counterexamples, and
to study how algebraic properties transfer between structures.

Let $R$ be a commutative ring and $M$ an $R$-module. The \emph{idealization}
(or \emph{trivial extension}) of $R$ by $M$ is the ring $R \ltimes M = \{ (r
,x) \mid r \in R, x \in M \}, $ with addition defined componentwise and
multiplication given by $(u,x)(v,y) = (u v, u y + v x) $ (see \cite{AW}). For any submodule $N
\subseteq M$ and any ideal $I$ of $R$, one has $\sqrt{I \ltimes N} = \sqrt{I}
\ltimes M. $

\begin{proposition}
\label{Prop} Let $I$ be a proper ideal of $R$ and $N$ a submodule of an
$R$-module $M$. Then the following statements hold:

\begin{enumerate}
\item If $I \ltimes N$ is an sdf-absorbing primary ideal of $R \ltimes M$,
then $I$ is an sdf-absorbing primary ideal of $R$.

\item $I$ is an sdf-absorbing primary ideal of $R$ if and only if $I \ltimes
M$ is an sdf-absorbing primary ideal of $R \ltimes M$.
\end{enumerate}
\end{proposition}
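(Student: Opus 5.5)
We need the definition of an sdf-absorbing primary ideal. From the proof of Theorem \ref{eq} $(2)\Rightarrow(3)$, it reads: a proper ideal $J$ of a ring $A$ is sdf-absorbing primary if $a^{2}-b^{2}\in J$ implies $a-b\in J$ or $a+b\in\sqrt{J}$. The plan is to check this condition directly, using the structure of $R\ltimes M$. The key identities are
\[
(a,x)^{2}-(b,y)^{2}=(a^{2}-b^{2},\,2ax-2by),\qquad (a,x)\pm(b,y)=(a\pm b,\,x\pm y),
\]
together with $\sqrt{I\ltimes N}=\sqrt{I}\ltimes M$ recalled above. We also use that $I\ltimes N$ is proper exactly when $I$ is proper.

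\emph{Part (1).} Let $a,b\in R$ with $a^{2}-b^{2}\in I$. Embed them as $(a,0)$ and $(b,0)$. Then $(a,0)^{2}-(b,0)^{2}=(a^{2}-b^{2},0)\in I\ltimes N$, since $0\in N$. By hypothesis, one of two things holds.
\begin{itemize}
\item $(a-b,0)\in I\ltimes N$, which gives $a-b\in I$.
\item $(a+b,0)\in\sqrt{I\ltimes N}=\sqrt{I}\ltimes M$, which gives $a+b\in\sqrt{I}$.
\end{itemize}
So $I$ is sdf-absorbing primary.

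\emph{Part (2), forward direction.} This is part (1) with $N=M$.

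\emph{Part (2), converse.} Suppose $I$ is sdf-absorbing primary, and let $(a,x)^{2}-(b,y)^{2}\in I\ltimes M$. The second coordinate $2ax-2by$ is unconstrained because the submodule is all of $M$. So the only information is $a^{2}-b^{2}\in I$. Then either $a-b\in I$, giving $(a,x)-(b,y)=(a-b,x-y)\in I\ltimes M$; or $a+b\in\sqrt{I}$, giving $(a+b,x+y)\in\sqrt{I}\ltimes M=\sqrt{I\ltimes M}$. Properness of $I\ltimes M$ follows from properness of $I$.

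The only step needing care is the radical identity $\sqrt{I\ltimes N}=\sqrt{I}\ltimes M$, which the paper states beforehand. If it had to be justified, note that $(r,m)^{n}=(r^{n},nr^{n-1}m)$. So if $r^{n}\in I$, then $(r,m)^{2n}=(r^{2n},\ast)$ and $(0,\ast)^{2}=0$, which handles the nilpotent part. Conversely, the first coordinate of a power of $(r,m)$ lying in $I\ltimes N$ forces $r\in\sqrt{I}$.

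Everything else is a routine coordinate check. No zero-element restrictions arise, because the definition used here has no nonzero hypothesis on $a$ and $b$.
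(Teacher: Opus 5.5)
Your proof is correct and follows essentially the same route as the paper. For part (1) you embed $a,b$ as $(a,0),(b,0)$; for part (2) you read off $a^{2}-b^{2}\in I$ from the first coordinate, since the second coordinate is unconstrained in $I\ltimes M$; and the remaining direction is part (1) with $N=M$. The differences are cosmetic: you phrase the second alternative through $\sqrt{I\ltimes N}=\sqrt{I}\ltimes M$ rather than explicit powers $((a+b)^{k},\ast)$, and only the trivial first-coordinate inclusion or the case $N=M$ is actually needed there. Also, your labels ``forward'' and ``converse'' in part (2) are swapped relative to the statement, though both directions are covered.
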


\begin{proof}
(1) Assume that $I \ltimes N$ is an sdf-absorbing primary ideal of $R \ltimes
M$, and let $u ,v \in R$ satisfy $u^{2} - v ^{2} \in I$. Then $(u,0)^{2} -
(v,0)^{2} = (u ^{2} - v ^{2}, 0) \in I \ltimes N. $ By the sdf-absorbing
primary property, either $(u -v ,0) \in I \ltimes N$ or $(u +v ,0)^{k} \in I
\ltimes N$ for some $k \ge1$. This immediately gives $u-v \in I$ or $(u+v
)^{k} \in I$, as required.

\medskip(2) Suppose that $I$ is an sdf-absorbing primary ideal of $R$, and let
$(u,x), (v ,y) \in R \ltimes M$ satisfy $(u ,x)^{2} - (v ,y)^{2} \in I \ltimes
M. $ Then $(u^{2} -v ^{2}, 2ux - 2v y) \in I \ltimes M. $ Since $I$ is
sdf-absorbing primary in $R$, either $u -v \in I$ or $(u +v )^{k} \in I$ for
some $k \ge1$. Hence, either $(u ,x) - (v ,y) = (u -v , x-y) \in I \ltimes M
\ \text{or} \ (u +v ,x+y)^{k} \in I \ltimes M, $ where $(u +v ,x+y)^{k} = ((u
+v )^{k}, k(u +v )^{k-1}(x+y))$. This shows that $I \ltimes M$ is
sdf-absorbing primary in $R \ltimes M$. The converse follows directly from
part (1).
\end{proof}

We note that the converse of Proposition \ref{Prop} (1) does not hold in
general: even if $I$ is an sdf-primary ideal of $R$, the ideal $I\ltimes N$ in
$R\ltimes M$ may fail to be sdf-absorbing primary, as illustrated in the next example.

\begin{example}
Consider the ideal $I=3\mathbb{Z}$ of $\mathbb{Z}$, the submodule
$N=2\mathbb{Z}$ of the $\mathbb{Z}$-module $\mathbb{Z}$, and the idealization
$I\ltimes N\subset\mathbb{Z}\ltimes\mathbb{Z}$. Take $(u,x_{1})=(2,1),\quad
(v,x_{2})=(-1,0)\in\mathbb{Z}\ltimes\mathbb{Z}.$ Then $(u,x_{1})^{2}%
-(v,x_{2})^{2}=(u^{2}-v^{2},$ $2ux_{1}-2vx_{2})=(3,4)\in I\ltimes N,$ but
$(u,x_{1})-(v,x_{2})=(3,1)\notin I\ltimes N,$ $((u,x_{1})+(v,x_{2}%
))^{k}=(1,\ast)\notin I\ltimes N$ for any $k\geq1$. Therefore, $I\ltimes N$
fails to be an sdf-absorbing primary ideal in $R\ltimes M$.
\end{example}

It is important to note that even if $I\ltimes N$ is sdf-absorbing primary in
$R\ltimes M$, the submodule $N$ itself need not satisfy the gsdf-absorbing condition.

\begin{example}
Consider the ideal $I=2\mathbb{Z}$ of $\mathbb{Z}$, and the submodule
$N=n\mathbb{Z}$ of $\mathbb{Z}$-module $\mathbb{Z}$ with $n\geq2$. In the
idealization $I\ltimes N$, any $(u,x_{1}),(v,x_{2})\in\mathbb{Z}%
\ltimes\mathbb{Z}$ satisfying
\[
(u,x_{1})^{2}-(v,x_{2})^{2}=(u^{2}-v^{2},2ux_{1}-2vx_{2})\in I\ltimes N
\]
must have $u^{2}-v^{2}\in I=2\mathbb{Z}$, implying $u$ and $v$ are of the same
parity. Consequently, $2\mid(u-v)$ and $2\mid(u+v)$, which ensures $(u-v)\in
I$ and $(u+v)^{k}\in I$ for all $k\geq1$. Therefore, $(u+v,x_{1}+x_{2}%
)^{k}=(u+v)^{k},k(u+v)^{k-1}(x_{1}+x_{2}))\in I\ltimes N.$ However, $N$ itself
may fail to be gsdf-absorbing. For example, if $N=42\mathbb{Z}$, then
$(5^{2}-2^{2})\cdot2=42\notin N$ and $(5+2)^{k}\cdot2=7^{k}\cdot2\notin N$ for
all $k\geq1$, showing that $N$ is not gsdf-absorbing in $\mathbb{Z}$.
\end{example}

We recall the notion of an \emph{amalgamation of rings}, see \cite{ElKhalfi-Kim-Mahdou}. Let $f:R_{1}%
\rightarrow R_{2}$ be a ring homomorphism between commutative rings with
identity, and let $J$ be an ideal of $R_{2}$. The \emph{amalgamation of
$R_{1}$ with $R_{2}$ along $J$ with respect to $f$} is the subring of
$R_{1}\times R_{2}$ defined by $R_{1}\bowtie^{f}J=\{(u,f(u)+j)\mid u\in
R_{1},\,j\in J\}.$

Let $M_{1}$ be an $R_{1}$-module and $M_{2}$ be an $R_{2}$-module. The
\emph{amalgamation of $M_{1}$ and $M_{2}$ along $J$ with respect to a module
homomorphism $\varphi:M_{1}\rightarrow M_{2}$} is defined as in \cite{Rachida} by%

\[
M_{1}\Join^{\varphi}JM_{2}=\bigl\{(x_{1},\varphi(x_{1})+x_{2})\mid x_{1}\in
M_{1},\,x_{2}\in JM_{2}\bigr\}.
\]

This construction naturally equips $M_{1}\Join^{\varphi}JM_{2}$ with an
$(R_{1}\Join^{f}J)$-module structure, where scalar multiplication is given by
\[
(u,f(u)+j)\cdot(x_{1},\varphi(x_{1})+x_{2})=\bigl(ux_{1},\varphi
(ux_{1})+f(u)x_{2}+j\varphi(x_{1})+jx_{2}\bigr).
\]

Let $N_{1} \subseteq M_{1}$ and $N_{2} \subseteq M_{2}$ be submodules. We
define the following subsets of $M_{1} \Join^{\varphi}J M_{2}$:
\[
N_{1} \Join^{\varphi}J M_{2} = \bigl\{ (x_{1}, \varphi(x_{1}) + x_{2}) \in
M_{1} \Join^{\varphi}J M_{2} \mid x_{1} \in N_{1}, x_{2} \in JM_{2} \bigr\},
\]
\[
\overline{N_{2}}^{\varphi}= \bigl\{ (x_{1}, \varphi(x_{1}) + x_{2}) \in M_{1}
\Join^{\varphi}J M_{2} \mid\varphi(x_{1}) + x_{2} \in N_{2} \bigr\}.
\]
Both sets are submodules of $M_{1} \Join^{\varphi}J M_{2}$, and we will use
this notation throughout the section.

In the following theorems, we provide necessary and sufficient conditions
under which $N_{1} \Join^{\varphi}J M_{2}$ and $\overline{N_{2}}^{\varphi}$
are sdf-absorbing submodules of $M_{1} \Join^{\varphi}J M_{2}$, in the spirit
of the approach used in \cite[Theorem 9]{Khashan2025}.

\begin{theorem}
Let $f:R_{1}\rightarrow R_{2}$ be a ring homomorphism, $J$ an ideal of $R_{2}%
$, $M_{1}$ an $R_{1}$-module, $M_{2}$ an $R_{2}$-module (considered as an
$R_{1}$-module via $f$), and $\varphi:M_{1}\rightarrow M_{2}$ an $R_{1}%
$-module homomorphism. Define the $(R_{1}\bowtie^{f}J)$-module
\[
M_{1}\Join^{\varphi}JM_{2}=\bigl\{(x_{1},\varphi(x_{1})+x_{2})\mid x_{1}\in
M_{1},\,x_{2}\in JM_{2}\bigr\}.
\]
Let $N_{1}$ be a proper submodule of $M_{1}$. Then, the following statements
are equivalent:

\begin{enumerate}
\item $N_{1} \Join^{\varphi}J M_{2}$ is a gsdf-absorbing submodule of $M_{1}
\Join^{\varphi}J M_{2}$.

\item $N_{1}$ is a gsdf-absorbing submodule of $M_{1}$.
\end{enumerate}
\end{theorem}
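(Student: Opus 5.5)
The approach is to work directly with the scalar multiplication of $R_{1}\bowtie^{f}J$ on $M_{1}\Join^{\varphi}JM_{2}$. The key observation is that membership in $N_{1}\Join^{\varphi}JM_{2}$ depends only on the first coordinate. An element $(y_{1},\varphi(y_{1})+y_{2})$ of $M_{1}\Join^{\varphi}JM_{2}$ lies in $N_{1}\Join^{\varphi}JM_{2}$ if and only if $y_{1}\in N_{1}$, since the second coordinate ranges over all of $\varphi(y_{1})+JM_{2}$. Moreover, the first coordinate of $(u,f(u)+j)\cdot(x_{1},\varphi(x_{1})+x_{2})$ is $ux_{1}$. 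Hence for any $a=(u,f(u)+j)\in R_{1}\bowtie^{f}J$ and $X=(x_{1},\varphi(x_{1})+x_{2})$ we have
\[
aX\in N_{1}\Join^{\varphi}JM_{2}\iff ux_{1}\in N_{1}.
\]
Since the ring operations in $R_{1}\bowtie^{f}J$ are componentwise, the first coordinates of $a^{2}-b^{2}$, $a-b$ and $(a+b)^{k}$ are $u^{2}-v^{2}$, $u-v$ and $(u+v)^{k}$ respectively. Properness transfers as well: $N_{1}\Join^{\varphi}JM_{2}$ is proper if and only if $N_{1}\neq M_{1}$, which is given.

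For $(2)\Rightarrow(1)$, take $a=(u,f(u)+j)$, $b=(v,f(v)+j')$ and $X$ as above with $(a^{2}-b^{2})X\in N_{1}\Join^{\varphi}JM_{2}$. By the observation, $(u^{2}-v^{2})x_{1}\in N_{1}$. The gsdf-absorbing property of $N_{1}$ then gives $(u-v)x_{1}\in N_{1}$ or $(u+v)^{k}x_{1}\in N_{1}$ for some $k\ge 1$. Applying the observation in reverse yields $(a-b)X\in N_{1}\Join^{\varphi}JM_{2}$ or $(a+b)^{k}X\in N_{1}\Join^{\varphi}JM_{2}$.

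For $(1)\Rightarrow(2)$, take $u,v\in R_{1}$ and $x_{1}\in M_{1}$ with $(u^{2}-v^{2})x_{1}\in N_{1}$. Lift them to $a=(u,f(u))$, $b=(v,f(v))$ and $X=(x_{1},\varphi(x_{1}))$, using $j=0$ and $x_{2}=0$. Then $(a^{2}-b^{2})X$ has first coordinate $(u^{2}-v^{2})x_{1}\in N_{1}$, so it lies in $N_{1}\Join^{\varphi}JM_{2}$. Hypothesis (1) gives $(a-b)X$ or $(a+b)^{k}X$ in $N_{1}\Join^{\varphi}JM_{2}$. Reading off first coordinates gives $(u-v)x_{1}\in N_{1}$ or $(u+v)^{k}x_{1}\in N_{1}$.

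There is no genuine obstacle here. The only point needing care is the displayed equivalence itself, namely that every element of $M_{1}\Join^{\varphi}JM_{2}$ whose first coordinate lies in $N_{1}$ actually belongs to $N_{1}\Join^{\varphi}JM_{2}$. This holds because the second coordinate of any element of the module already has the form $\varphi(y_{1})+y_{2}$ with $y_{2}\in JM_{2}$, which is exactly the form required in the definition of $N_{1}\Join^{\varphi}JM_{2}$.
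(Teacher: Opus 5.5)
Your proof is correct and follows the paper's route. For $(1)\Rightarrow(2)$ you lift to $(u,f(u))$, $(v,f(v))$ and $(x_1,\varphi(x_1))$, and for $(2)\Rightarrow(1)$ you read off first coordinates; your explicit observation that membership in $N_{1}\Join^{\varphi}JM_{2}$ depends only on the first coordinate is exactly the step the paper leaves as ``one can easily check,'' so your write-up is, if anything, more complete.
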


\begin{proof}
Let $N_{1}$ be a proper submodule of $M_{1}$. Notice first that $N_{1}$ is
proper in $M_{1}$ if and only if $N_{1} \Join^{\varphi}J M_{2}$ is proper in
$M_{1} \Join^{\varphi}J M_{2}$.

\text{(1) $\Rightarrow$ (2):} Let $N_{1}$ be a proper submodule of $M_{1}$.
Notice first that $N_{1}$ is proper in $M_{1}$ if and only if $N_{1}%
\Join^{\varphi}JM_{2}$ is proper in $M_{1}\Join^{\varphi}JM_{2}$. Assume that
$N_{1}\Join^{\varphi}JM_{2}$ is a gsdf-absorbing submodule of $M_{1}%
\Join^{\varphi}JM_{2}$. Take arbitrary $x_{1}\in M_{1}$ and $u_{1},v_{1}\in
R_{1}$ such that $(u_{1}^{2}-v_{1}^{2})x_{1}\in N_{1}.$ Consider the element
$(x_{1},\varphi(x_{1}))\in M_{1}\Join^{\varphi}JM_{2}$. Then
\[
\big((u_{1},f(u_{1}))^{2}-(v_{1},f(v_{1}))^{2}\big)\cdot(x_{1},\varphi
(x_{1}))=((u_{1}^{2}-v_{1}^{2})x_{1},\varphi((u_{1}^{2}-v_{1}^{2})x_{1}))\in
N_{1}\Join^{\varphi}JM_{2}.
\]
Since $N_{1}\Join^{\varphi}JM_{2}$ is gsdf-absorbing, we conclude that
either\newline$((u_{1}-f(u_{1}))-(v_{1}-f(v_{1})))(x_{1},\varphi(x_{1}))\in
N_{1}\Join^{\varphi}JM_{2}$ or\newline$((u_{1}-f(u_{1}))+(v_{1}-f(v_{1}%
)))^{k}(x_{1},\varphi(x_{1}))\in N_{1}\Join^{\varphi}JM_{2}$, for some
$k\geq1.$ Hence $(u_{1}-v_{1})x_{1}\in N_{1}\ \text{or}\ (u_{1}+v_{1}%
)^{k}x_{1}\in N_{1}\text{ for some }k\geq1.$ Thus, $N_{1}$ is gsdf-absorbing
in $M_{1}$.

\text{(2) $\Rightarrow$ (1):} Now suppose that $N_{1}$ is a gsdf-absorbing
submodule of $M_{1}$.

Let $(u_{1},f(u_{1})+j),(v_{1},f(v_{1})+j^{\prime})\in R_{1}\Join^{f}J$ and
$(x_{1},\varphi(x_{1})+x_{2})\in M_{1}\Join^{\varphi}JM_{2}$ satisfy
\[
\big((u_{1},f(u_{1})+j)^{2}-(v_{1},f(v_{1})+j^{\prime2}\big)\cdot
(x_{1},\varphi(x_{1})+x_{2})\in N_{1}\Join^{\varphi}JM_{2}.
\]
Looking at the first component gives $(u_{1}^{2}-v_{1}^{2})x_{1}\in N_{1}$.
Because $N_{1}$ is gsdf-absorbing, we have either $(u_{1}-v_{1})x_{1}\in
N_{1}$ or $(u_{1}+v_{1})^{k}x_{1}\in N_{1}$ for some $k\geq1$. Consequently,
since $x_{2}\in JM_{2}$, one can easily check that the corresponding
operations in $M_{1}\Join^{\varphi}JM_{2}$ satisfy
\[
\big((u_{1},f(u_{1})+j)-(v_{1},f(v_{1})+j^{\prime})\big)\cdot(x_{1}%
,\varphi(x_{1})+x_{2})\in N_{1}\Join^{\varphi}JM_{2}%
\]
or
\[
\big((u_{1},f(u_{1})+j)+(v_{1},f(v_{1})+j^{\prime})\big)^{k}\cdot
(x_{1},\varphi(x_{1})+x_{2})\in N_{1}\Join^{\varphi}JM_{2}.
\]
Thus, $N_{1}\Join^{\varphi}JM_{2}$ is a gsdf-absorbing submodule of
$M_{1}\Join^{\varphi}JM_{2}$.
\end{proof}
\section*{Conflict of Interest}
The authors declare that they have no conflict of interest.


\begin{thebibliography}{99}                                                                                               %


\bibitem {AB2024}D. D. Anderson, A. Badawi, and J. Coykendall,
\textit{Square-difference factor absorbing ideals of a commutative ring},
Communications in Algebra, 52(4) (2024), 1542--1550.

\bibitem {AW}D. D. Anderson and M. Winders, Idealization of a module,
\textit{Journal of Commutative Algebra}, 1 (2009), 69--83.

\bibitem {AM1969}M. F. Atiyah and I. G. Macdonald, \textit{Introduction to
Commutative Algebra}, Addison-Wesley Publishing Co., Reading, Mass.-London-Don
Mills, Ont., 1969.


\bibitem {B2007}A. Badawi, \textit{On 2-absorbing ideals of commutative
rings}, Bull. Austral. Math. Soc., 75 (2007), 417--429.


\bibitem {BTY}A. Badawi, U. Tekir, and E. Yetkin, On 2-absorbing primary
ideals in commutative rings, \textit{Bulletin of the Korean Mathematical
Society}, 51(4) (2014), 1163--1173.

\bibitem {Baz}M. Baziar, M. Behboodi, Classical primary submodules and
decomposition theory of modules, Journal of Algebra and its Applications, 8(3)
(2009), 351-362.

\bibitem {Dar}A. Y. Darani, F. Soheilnia, 2-absorbing and weakly 2-absorbing
submodules, Thai J. Math, 9(3) (2011), 577-584.

\bibitem {ES1988}Z. A. El-Bast and P. P. Smith, \textit{Multiplication
modules}, Communications in Algebra, 16(4) (1988), 755--779.

\bibitem {Rachida}R. El Khalfaoui, N. Mahdou, P. Sahandi, N. Shirmohammadi,
Amalgamated modules along an ideal. Communications of the Korean Mathematical
Society, \textbf{36(1) }(2021), 1-10.

\bibitem {ElKhalfi-Kim-Mahdou}A. El Khalfi, H. Kim, and N. Mahdou,
Amalgamation extension in commutative ring theory: a survey, \textit{Moroccan
Journal of Algebra and Geometry with Applications}, 1(1) (2022), 139--182.

\bibitem {Khashan2025}H. A. Khashan and E. Yetkin Celikel,
\textit{Square-difference factor absorbing submodules}, Analele
\c{S}tiin\c{t}ifice ale Universit\u{a}\c{t}ii Ovidius Constan\c{t}a, 33(3), 2025.

\bibitem {KhashanCelikel}H. A. Khashan, E. Yetkin Celikel, and \"{U}. Tekir,
\textit{Square-Difference Factor Absorbing Primary Ideals of Commutative
Rings}, Journal of Algebra and Its Applications, 2026, doi.org/10.1142/S0219498827500824

\bibitem {Lang}S. Lang, \textit{Algebra}, 3rd ed., Graduate Texts in
Mathematics, Vol. 211, Springer-Verlag, New York, 2002.

\bibitem {M1992}R. L. McCasland and M. E. Moore, \textit{On prime submodules},
Communications in Algebra, 20(11) (1992), 3403--3417.

\bibitem {M}H. Mostafanasab, E. Yetkin, \"{U}. Tekir, A. Y. Darani, On
2-absorbing primary submodules of modules over commutative rings. Analele
stiintifice ale Universitatii" Ovidius" Constanta. Seria Matematica, 24(1),
(2016), 335-351.

\bibitem {Pay}Sh. Payrovi, S. Babaei,  On 2-absorbing submodules, Algebra
Colloquium, 19(1), (2012), 913-920.

\bibitem {SYZ1994}H. Sharif, S. Yassemi, and R. Zaare-Nahandi, \textit{Primary
submodules}, Communications in Algebra, 22(13) (1994), 5239--5244.
\end{thebibliography}
\end{document}